\newtheorem{theorem}{Theorem}[section]
\newtheorem{lemma}[theorem]{Lemma}
\newtheorem{assumption}[theorem]{Assumption}
\newtheorem{definition}[theorem]{Definition}
\newtheorem{proposition}[theorem]{Proposition}
\newtheorem{condition}[theorem]{Condition}
\numberwithin{equation}{section}
\newenvironment{proof}[1][Proof]{\textbf{#1.} }
{\ \rule{0.75em}{0.75em}\smallskip}
\begin{document}

\begin{center}
\Large\bf On existence of a variational regularization parameter under Morozov's discrepancy principle
\end{center}

\begin{center}
Liang Ding\footnote{Department of Mathematics, Northeast Forestry University, Harbin 150040, China;
e-mail: {\tt dl@nefu.edu.cn}. The work of this author was supported by the Fundamental Research Funds
for the Central Universities (no.\ 2572021DJ03).}\quad 
Long Li\footnote{Department of Mathematics, Northeast Forestry University, Harbin 150040, China;
e-mail: {\tt 15146259835@nefu.edu.cn}.}\quad
Weimin Han\footnote{Department of Mathematics, University of Iowa, Iowa City, IA 52242, USA;
e-mail: {\tt weimin-han@uiowa.edu}.}\quad
Wei Wang\footnote{Department of Mathematics, Jiaxing University, Jiaxing 314001, China;
e-mail: {\tt weiwang@zjxu.edu.cn}.}
\end{center}

\smallskip
\begin{quote}
{\bf Abstract.}
{Morozov's discrepancy principle is commonly adopted in Tikhonov regularization for choosing the regularization parameter. Nevertheless, for a general non-linear inverse problem, the discrepancy $\|F(x_{\alpha}^{\delta})-y^{\delta}\|_Y$ does not depend continuously on $\alpha$ and it is questionable whether there exists a regularization parameter $\alpha$ such that $\tau_1\delta\leq \|F(x_{\alpha}^{\delta})-y^{\delta}\|_Y\leq \tau_2 \delta$ $(1\le \tau_1<\tau_2)$. In this paper, we prove the existence of $\alpha$ under Morozov's discrepancy principle if $\tau_2\ge (3+2\gamma)\tau_1$, where $\gamma>0$ is a parameter in a tangential cone condition for the nonlinear operator $F$. Furthermore, we present results on the convergence of the regularized solutions under Morozov's discrepancy principle. Numerical results are reported on the efficiency of the proposed approach.}

\end{quote}

\smallskip
\noindent
{\bf Keywords.}  Non-linear inverse problem, Morozov's discrepancy principle, tangential cone condition, convex penalty

\section{Introduction}\label{sec1}

\par In this paper, we consider to solve an operator equation of the form
\begin{equation}\label{equ1.1}
    F(x)=y,
\end{equation}
where $F: X\rightarrow Y$ is a nonlinear operator between two reflexive Banach spaces $X$ and $Y$. We assume $y^{\delta}\in Y$ is known with $\|y^{\delta}-y\|_{Y}\leq \delta$ for a given $\delta\geq 0$. The commonly adopted technique to solve problem \eqref{equ1.1} is the Tikhonov regularization of the form
\begin{equation}\label{equ1.2}
    \min\limits_{x\in X}\bigg\{\mathcal{J}_{\alpha}^{\delta}(x)= \frac{1}{2}\|F(x)-y^{\delta}\|_Y^{2}+\alpha\mathcal{R}(x)\bigg\},
\end{equation}
where $\alpha>0$ and ${\cal R}$ is a convex penalty term, cf.\ the monographs \cite{F2010,SGGHL2009} and the special issues \cite{BB18,DDD16,JM12,JMS17} for many developments on regularization properties and minimization schemes.

\par In practice, it is crucial to choose an appropriate regularization parameter $\alpha$ for problem \eqref{equ1.2}. Morozov's discrepancy principle (MDP) is the most commonly adopted technique to determine the regularization parameter $\alpha=\alpha(\delta, y^{\delta})>0$ such that
\begin{equation}\label{equ1.3}
    \tau_1\delta\leq \|F(x_{\alpha}^{\delta})-y^{\delta}\|_Y\leq \tau_2 \delta,
\end{equation}
where $1\le\tau_1<\tau_2$ and $x_{\alpha}^{\delta}$ is a minimizer of $\mathcal{J}_{\alpha}^{\delta}(x)$ in \eqref{equ1.2}. If the minimizer of $\mathcal{J}_{\alpha}^{\delta}(x)$ is unique, there exists a regularization parameter $\alpha$ such that \eqref{equ1.3} holds (\cite{AD19,AR10,TA1977}). However, due to the non-linearity of $F$, there may exist multiple minimizers of $\mathcal{J}_{\alpha}^{\delta}(x)$, the discrepancy $\|F(x_{\alpha}^{\delta})-y^{\delta}\|_Y$ does not exhibit continuous dependence on $\alpha$ and one can not ensure the existence of $\alpha$ such that \eqref{equ1.3} holds (\cite{R02, TA1977}). Indeed, a weakness of MDP lies in the fact that a regularization parameter $\alpha$ satisfying \eqref{equ1.3}
might not exist for a general nonlinear operator equation. In some applications of non-linear inverse problems, one has to assume the existence of $\alpha$ such that \eqref{equ1.3} holds. Few results are available on the existence of $\alpha$ under MDP.

\subsection{Related works}\label{sec1.1}

\par The first theoretical analysis on MDP for the Tikhonov regularization dates back to 1966 (\cite{M66}). Several numerical algorithms have been proposed to compute the regularization parameter $\alpha$ for the classical quadratic Tikhonov regularization (\cite{FM99,L1989,MPRS01}). Subsequently, MDP is extended to general convex regularizations. If $\mathcal{R}(x)$ is convex, then $\left\|F\left(x_{\alpha}^{\delta}\right)-y^{\delta}\right\|_{Y}$ depends continuously on $\alpha$, and the existence of $\alpha$ follows. In \cite{B09}, MDP is applied to the Tikhonov regularization with convex penalty terms  more general than the classical quadratic one. In \cite{JZZ12}, two iterative parameter choice methods by MDP are proposed for non-smooth Tikhonov regularization with general convex penalty terms.

\par We note that the results cited in the previous paragraph are limited to linear ill-posed problems. Due to the existence of possibly multiple minimizers, the techniques for linear ill-posed problems can not be extended to nonlinear ill-posed operator equations directly. Special regularization techniques are needed to explore the existence of the regularization parameter $\alpha$ which is determined by MDP. In \cite{R02}, for the classical quadratic Tikhonov regularization, it is shown that if $\tau_1=1$ and
    \[ \tau_2> 1+\frac{L+2\|F'({x^{\dag}})\|}{L}\max\left\{\frac{4}{Q^2}, 1\right\}, \]
the existence of $\alpha$ can be guaranteed, where $x^{\dag}$ is an $\mathcal R$-minimum solution of the equation \eqref{equ1.1}, and the parameters $L$ and $Q$ depend on the source condition and the lower bound of $x^{\dag}$. In \cite{AR10}, for a general convex penalty term, the existence of $\alpha$ is proved under the following condition:
there is no $\alpha>0$ with minimizers $x_{\alpha}^{\delta,1},x_{\alpha}^{\delta,2}$ such that
    \[\|F(x_{\alpha}^{\delta,1})-y^{\delta}\|_{Y}<\tau_1\delta <\tau_2\delta<\|F(x_{\alpha}^{\delta,2})-y^{\delta}\|_{Y}.\]
In \cite{AR11}, convergence rates are investigated via variational inequalities, where the regularization parameter is determined by MDP. A relation between uniqueness of minimizers in Tikhonov-type regularization and Morozov-like discrepancy principles is shown in \cite{AD19}. In \cite{DH23}, the convex function $\mathcal{R}(x)$ is replaced by a non-convex penalty term $\alpha\ell_{1}(x)-\beta\ell_{2}(x)$, and the existence of the regularization parameter $\alpha$ is demonstrated. It is proven that there exists at least one $\alpha$ such that
\begin{equation}\label{equ1.4}
    \delta\le\|F(x_{\alpha,\beta}^{\delta})-y^{\delta}\|_{Y}\le \left(\max\left\{ \tau^{2}\delta^2,(3+2\gamma)\delta^{2}+\delta(2+2\gamma) \|y^{\delta}-F(0)\|_Y\right\}\right)^{\frac{1}{2}}.
\end{equation}
However, \eqref{equ1.4} does not give a same order for the lower and the upper bound. The upper bounded is only $O(\sqrt{\delta})$.

\subsection{Contribution and organization}

\par In this paper, we assume $\mathcal{R}$ is convex and the operator $F$ satisfies a tangential cone condition: there exists a constant $\gamma>0$ such that
for arbitrary $x_1, x_2\in \mathcal{L}_{\alpha}^{\delta}$,
    \[ \|F(x_2)-F(x_1)-F'(x_1)(x_2-x_1)\|_Y\leq \gamma\|F(x_2)-F(x_1)\|_Y,\]
where $\mathcal{L}_{\alpha}^{\delta}$ represents the set of all minimizers of the functional $\mathcal{J}_{\alpha}^{\delta}\left(x\right)$ defined in \eqref{equ1.2}. We will prove that there exists at least one $\alpha$ such that
\begin{equation}\label{equ1.5}
    \tau_1\delta\le\|F(x_{\alpha}^{\delta})-y^{\delta}\|_{Y}\le c\delta,
\end{equation}
where $c:=\max\left\{ \tau_2,\tau_1(3+2\gamma)\right\}$. This implies that if $\tau_2\ge \tau_1(3+2\gamma)$, the existence of $\alpha$ can be guaranteed.
Based on \eqref{equ1.5}, we will investigate the well-posedness and convergence rate of the regularized solution of the problem \eqref{equ1.2}. We will show that with the parameter choice rule \eqref{equ1.5}, $\alpha\rightarrow 0$ as the noise level $\delta\rightarrow 0$.

\par For the traditional MDP for linear ill-posed problems, the existence of $\alpha$ is guaranteed. However, for nonlinear ill-posed problems, the discrepancy term $\|F(x_{\alpha}^{\delta})-y^{\delta}\|_{Y}$ is not continuous with respect to $\alpha$. So it is challenging to choose appropriate values of $\tau_1$ and $\tau_2$ to ensure the existence of $\alpha$. We will prove that the modified MDP \eqref{equ1.5} can ensure the existence of $\alpha$ and we can apply Algorithm \ref{alg1} to determine the value of $\alpha$.
We will present results from two numerical experiments to show that the upper bound $c\delta$ in \eqref{equ1.5} is actually needed, cf.\ Fig.\ \ref{fig6} and Fig.\ \ref{fig10}.

An outline of the rest of this paper is as follows. In the next section we introduce the notation and review some results on the Tihkonov regularization with a general convex penalty term. In Section \ref{sec3}, we investigate the existence of $\alpha$ determined by MDP \eqref{equ1.5}. In Section \ref{sec4}, we present results on the well-posedness and the convergence rate of the regularized solution under MDP \eqref{equ1.5}. In addition, we show that MDP \eqref{equ1.5} can guarantee $\alpha(\delta,y^{\delta})\rightarrow0$ as $\delta\rightarrow 0$. Finally, numerical results are reported in Section \ref{sec5} on a nonlinear compressive sensing problem and a nonlinear one-dimensional gravity inversion problem.

\section{Preliminaries}\label{sec2}

\par We first briefly introduce some notation for the Tikhonov regularization. Denote by
\begin{equation}\label{equ2.1}
    x^{\delta}_{\alpha}\in \mathop{\arg\min}_{x\in X}\left\{\frac{1}{2} \|F(x)-y^{\delta}\|_Y^{2}+\alpha\mathcal{R}(x)\right\}
\end{equation}
a minimizer of the regularization function $\mathcal{J}_{\alpha}^{\delta}(x)$ defined
in (\ref{equ1.2}). Let $\mathcal{L}^{\delta}_{\alpha}$ be the set of all such minimizers $x^{\delta}_{\alpha}$.
An element $x^{\dagger}\in X$ is called an $\mathcal R$-minimum solution of the equation \eqref{equ1.1} if
    \[ F(x^{\dagger})=y~~and~~ \mathcal R(x^{\dagger})
    =\min\limits_{x\in X}\{\mathcal R(x)\mid \, F(x)=y\}.\]

\par Throughout this paper, we will assume the operator $F$ and the data $y^\delta$ have the following properties.

\begin{condition}\label{con2.1}
{\rm(i)} $F: X \rightarrow Y$ is Fr\'{e}chet differentiable.

{\rm(ii)} $F: X \rightarrow Y$ is weakly sequentially closed, i.e.\ $x_n\rightharpoonup x$ in $X$ and $F(x_n)\rightharpoonup y$ in $Y$ imply that $F(x)=y$.

{\rm(iii)} There exists a constant $\gamma>0$ such that
\begin{equation}\label{equ2.2}
\|F(x_2)-F(x_1)-F'(x_1)(x_2-x_1)\|_Y\leq \gamma\,\|F(x_2)-F(x_1)\|_Y\quad\forall\, x_1, x_2\in \mathcal{L}_{\alpha}^{\delta}.
\end{equation}

{\rm(iv)} There exist $\delta>0$ and $\tau_2>\tau_1\ge 1$ such that
\begin{equation}\label{equ2.3}
\|y-y^{\delta}\|_Y\le \tau_1\delta<\tau_2\delta\le\|F(0)-y^{\delta}\|_Y.
\end{equation}
\end{condition}

The condition \eqref{equ2.2} was used in \cite[pp.\ 278--279]{EHN1996}, \cite[p.\ 6]{KNS2012}, \cite[pp.\ 69--70]{SKHK2012}. It follows from \eqref{equ2.2} that
    \[\|F'(x_1)(x_2-x_1)\|_Y\leq(1+\gamma)\|F(x_2)-F(x_1)\|_Y,\]
which has been adopted by several researchers. In \eqref{equ2.3}, we require $\|F(0)-y^{\delta}\|_Y\ge \tau\delta$, which is a reasonable assumption. Indeed, if $\|F(0)-y^{\delta}\|_Y< \tau\delta$, then $0$ can be viewed as a good approximation to the regularized solution $x_{\alpha}^{\delta}$. Moreover, practically, it is almost impossible to recover a solution from an observed data of a size in the same order as the noise.

\par Next, we assume the convex penalty term $\mathcal{R}$ has the following properties.

\begin{condition}\label{con2.2}

{\rm(i)}  $\mathcal{R}(x)$ is proper, and $\mathcal{R}(x)=0$ if and only if $x=0$.

{\rm(ii)  (Coercivity)} $\|x\|_{{X}}\rightarrow \infty$ implies $\mathcal{R}(x)\rightarrow \infty$.

{\rm(iii)  (Weak lower semi-continuity)} $x_n\rightharpoonup x$ in $X$ implies
$ \liminf_n\mathcal{R}(x_n)\geq\mathcal{R}(x)$.

{\rm (iv) (Radon-Riesz property)} $x_n\rightharpoonup x$ in $X$ and $\mathcal{R}(X_n)\rightarrow
\mathcal{R}(X)$ imply $x_n\rightarrow x$ in $X$.
\end{condition}

\begin{definition}\label{def2.3}
{\rm (Bregman distance)} A Bregman distance $D_{\mathcal{R}}^\xi: X\rightarrow \mathbb{R}^+$ of two element $x_1, x_2\in X$ is defined by
    \[  D_{\mathcal{R}}^\xi(x_1, x_2):= \mathcal{R}(x_1)-\mathcal{R}(x_2)-\langle \xi, x_1-x_2\rangle, \]
where $\xi\in \partial \mathcal{R}(x_2)$.
\end{definition}

\begin{definition}\label{def2.4}
{\rm  (Morozov's discrepancy principle)} Given $\tau_2>\tau_1\ge 1$, choose $\alpha=\alpha(\delta, y^{\delta})>0$ such that
\begin{equation}\label{equ2.4}
    \tau_1\delta\leq \|F(x_{\alpha}^{\delta})-y^{\delta}\|_Y\leq \tau_2 \delta
\end{equation}
holds for an element $x_{\alpha}^{\delta}\in \mathcal{L}^{\delta}_{\alpha}$.
\end{definition}

\par The following first order necessary condition for problem \eqref{equ1.2} is standard (\cite[Lemma 3.1]{DH20}).

\begin{lemma}\label{lem2.5}
If $\hat{x}$ is a minimizer of $\mathcal{J}_{\alpha}^{\delta}(x)$, then
\begin{equation}\label{equ2.5}
    \langle F'(\hat{x})(F(\hat{x})-{y^{\delta}}),z-\hat{x}\rangle\geq \alpha\mathcal{R}(\hat{x})-\alpha\mathcal{R}(z)\quad \forall\, z\in X.
\end{equation}
\end{lemma}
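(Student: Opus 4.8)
The plan is to treat \eqref{equ2.5} as the standard first-order optimality condition for minimizing a sum of a Fr\'echet-smooth data-fidelity term and the convex penalty $\alpha\mathcal{R}$, and to derive the variational inequality by a convex-combination perturbation of the minimizer rather than by differentiating $\mathcal{R}$ directly (which is only assumed convex, not smooth). Fix $z\in X$. If $\mathcal{R}(z)=+\infty$ the claimed inequality is trivial, so I may assume $z\in\operatorname{dom}\mathcal{R}$. For $t\in(0,1]$ set $\hat{x}_t:=\hat{x}+t(z-\hat{x})=(1-t)\hat{x}+tz$ and exploit the minimality $\mathcal{J}_{\alpha}^{\delta}(\hat{x})\le\mathcal{J}_{\alpha}^{\delta}(\hat{x}_t)$ along this segment.

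First I would bound the penalty part by convexity of $\mathcal{R}$, namely $\mathcal{R}(\hat{x}_t)\le(1-t)\mathcal{R}(\hat{x})+t\mathcal{R}(z)$. Next, since $F$ is Fr\'echet differentiable by Condition \ref{con2.1}(i), the fidelity functional $g(x):=\tfrac12\|F(x)-y^{\delta}\|_Y^2$ is differentiable, and a first-order expansion at $\hat{x}$ in the direction $z-\hat{x}$ gives
\[
g(\hat{x}_t)=g(\hat{x})+t\,\langle F(\hat{x})-y^{\delta},\,F'(\hat{x})(z-\hat{x})\rangle_Y+o(t)\quad\text{as }t\to 0^+.
\]
Bounding $\mathcal{J}_{\alpha}^{\delta}(\hat{x}_t)=g(\hat{x}_t)+\alpha\mathcal{R}(\hat{x}_t)$ from above by these two estimates and chaining with $\mathcal{J}_{\alpha}^{\delta}(\hat{x})\le\mathcal{J}_{\alpha}^{\delta}(\hat{x}_t)$, the common term $g(\hat{x})$ cancels and the penalty terms combine to $-t\alpha\mathcal{R}(\hat{x})$, leaving
\[
0\le t\,\langle F(\hat{x})-y^{\delta},\,F'(\hat{x})(z-\hat{x})\rangle_Y+t\,\alpha\bigl(\mathcal{R}(z)-\mathcal{R}(\hat{x})\bigr)+o(t).
\]
Dividing by $t>0$ and letting $t\to 0^+$ kills the $o(t)/t$ term and yields $\alpha\mathcal{R}(\hat{x})-\alpha\mathcal{R}(z)\le\langle F(\hat{x})-y^{\delta},\,F'(\hat{x})(z-\hat{x})\rangle_Y$.

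Finally I would rewrite the right-hand pairing over $Y$ as a pairing over $X$ by moving $F'(\hat{x})$ onto its adjoint, $\langle F(\hat{x})-y^{\delta},\,F'(\hat{x})(z-\hat{x})\rangle_Y=\langle F'(\hat{x})^{*}(F(\hat{x})-y^{\delta}),\,z-\hat{x}\rangle$, which is exactly the quantity written in \eqref{equ2.5} once $F'(\hat{x})$ in the statement is read as the adjoint acting on $F(\hat{x})-y^{\delta}$. The only genuinely delicate point is the differentiability of $g$ and the correct interpretation of the pairing $\langle F(\hat{x})-y^{\delta},\,\cdot\,\rangle_Y$ in the Banach-space setting: when $Y$ is Hilbert this is simply the inner product, whereas for a general smooth reflexive $Y$ one reads it through the duality map arising from the derivative of $\tfrac12\|\cdot\|_Y^2$. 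Everything else --- convexity of $\mathcal{R}$, the cancellations, and the passage to the limit --- is routine.
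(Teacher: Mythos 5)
Your proof is correct and follows essentially the same route as the paper, which gives no proof of Lemma \ref{lem2.5} itself but cites it as standard from \cite{DH20}: the convex-combination perturbation $\hat{x}_t=(1-t)\hat{x}+tz$, convexity of $\mathcal{R}$, first-order expansion of the fidelity term, division by $t>0$, and passage to the limit $t\to 0^+$ is precisely that standard argument. Your observation that $F'(\hat{x})$ in \eqref{equ2.5} must be read as the adjoint $F'(\hat{x})^{*}$ acting on $F(\hat{x})-y^{\delta}$ is also consistent with the paper's own usage in the proof of Lemma \ref{lem3.7}.
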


\section{Existence of regularization parameter}\label{sec3}

\par In this section, we discuss the existence of a regularization parameter $\alpha$ determined by MDP. In the rest of this paper, we view $\alpha$ as the regularization parameter.  For a fixed noise level $\delta>0$, define functions
\begin{align*}
    G(x_{\alpha}^{\delta}) &:=\frac{1}{2}\|F(x_{\alpha}^{\delta})-y^{\delta}\|_Y^{2},\\
    m(\alpha) &:=\mathcal{J}_{\alpha}^{\delta}(x_{\alpha}^{\delta})=\min \mathcal{J}_{\alpha}^{\delta}(x)
\end{align*}
for $\alpha\in (0, \infty)$.

\par Next, we recall some properties of $\mathcal R(x_{\alpha}^{\delta})$, $G(x_{\alpha}^{\delta})$ and $m(\alpha)$, cf.\ \cite{TLY1998} for their proofs.

\begin{lemma}\label{lem3.1}
$\mathcal R(x_{\alpha}^{\delta})$ is non-increasing, whereas $G(x_{\alpha}^{\delta})$ and $m(\alpha)$ are non-decreasing with respect to $\alpha\in (0, \infty)$.
\end{lemma}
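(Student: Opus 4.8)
The plan is to prove all three monotonicity statements from the single variational (minimality) inequalities, by comparing the two minimization problems associated with two parameter values. Fix $0<\alpha_1<\alpha_2$ and let $x_i:=x_{\alpha_i}^{\delta}$ denote any minimizer of $\mathcal{J}_{\alpha_i}^{\delta}$, $i=1,2$; write $G(x)=\frac12\|F(x)-y^{\delta}\|_Y^2$ so that $\mathcal{J}_{\alpha}^{\delta}(x)=G(x)+\alpha\mathcal{R}(x)$. Testing the minimality of $x_1$ against $x_2$, and that of $x_2$ against $x_1$, gives
\begin{align*}
G(x_1)+\alpha_1\mathcal{R}(x_1) &\le G(x_2)+\alpha_1\mathcal{R}(x_2),\\
G(x_2)+\alpha_2\mathcal{R}(x_2) &\le G(x_1)+\alpha_2\mathcal{R}(x_1).
\end{align*}
These two inequalities are the only input needed; in particular neither the differentiability of $F$ nor the tangential cone condition \eqref{equ2.2} plays any role.

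First I would add the two inequalities; the $G$-terms cancel, and after rearranging one obtains $(\alpha_2-\alpha_1)\big(\mathcal{R}(x_1)-\mathcal{R}(x_2)\big)\ge 0$. Since $\alpha_2-\alpha_1>0$, this yields $\mathcal{R}(x_1)\ge\mathcal{R}(x_2)$, i.e.\ $\mathcal{R}(x_{\alpha}^{\delta})$ is non-increasing. Next, feeding $\mathcal{R}(x_2)\le\mathcal{R}(x_1)$ back into the first inequality gives $G(x_1)-G(x_2)\le\alpha_1\big(\mathcal{R}(x_2)-\mathcal{R}(x_1)\big)\le 0$, so $G(x_1)\le G(x_2)$ and $G(x_{\alpha}^{\delta})$ is non-decreasing. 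Finally, recalling $\mathcal{R}\ge 0$ (Condition~\ref{con2.2}(i)) and using the minimality of $x_1$,
\[
m(\alpha_1)\le\mathcal{J}_{\alpha_1}^{\delta}(x_2)=G(x_2)+\alpha_1\mathcal{R}(x_2)\le G(x_2)+\alpha_2\mathcal{R}(x_2)=m(\alpha_2),
\]
so $m$ is non-decreasing; equivalently, $m(\alpha)=\inf_x\big(G(x)+\alpha\mathcal{R}(x)\big)$ is an infimum of functions each affine and non-decreasing in $\alpha$.

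I expect no substantial obstacle, as the result is elementary. The one point requiring care is that, because $F$ is nonlinear, $\mathcal{J}_{\alpha}^{\delta}$ may admit several minimizers, so $\mathcal{R}(x_{\alpha}^{\delta})$ and $G(x_{\alpha}^{\delta})$ need not be single-valued. The comparison above is robust to this: it holds for every selection of $x_1,x_2$, so the monotonicity between two $\alpha$-levels is uniform over all choices of minimizers, while $m(\alpha)$ is in any case well defined as the minimum value. The only place a sign hypothesis enters is the nonnegativity of $\mathcal{R}$, used solely for the monotonicity of $m$.
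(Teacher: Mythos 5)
Your proof is correct and is essentially the standard argument the paper invokes (it defers the proof of Lemma \ref{lem3.1} to \cite{TLY1998}, where exactly this two-way comparison of minimality inequalities is used): adding the two inequalities gives monotonicity of $\mathcal{R}(x_{\alpha}^{\delta})$, substituting back gives it for $G(x_{\alpha}^{\delta})$, and nonnegativity of $\mathcal{R}$ gives it for $m(\alpha)$. Your remark that the argument is valid for every selection of minimizers, so that non-uniqueness causes no difficulty, is a worthwhile clarification consistent with the paper's setting.
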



\begin{lemma}\label{lem3_2} 
Under Condition \ref{con2.1}, there exist $\alpha_{1},\alpha_{2}\in\mathbb{R}^{+}$
such that
    \[ \|F(x_{\alpha_{1},\beta_{1}}^{\delta})-y^{\delta}\|_{Y} <\tau_1\delta <\tau_2\delta<\|F(x_{\alpha_{2},\beta_{2}}^{\delta})-y^{\delta}\|_{Y}, \]
where $\beta_1=\alpha_1\eta$, $\beta_2=\alpha_2\eta$ and $\eta\in [0,1]$.
\end{lemma}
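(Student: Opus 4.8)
The plan is to treat the two inequalities separately and to reduce each to a limiting statement in $\alpha$, exploiting the monotonicity recorded in Lemma \ref{lem3.1}: since $G(x_\alpha^\delta)=\tfrac12\|F(x_\alpha^\delta)-y^\delta\|_Y^2$ is non-decreasing in $\alpha$, it suffices to control its behaviour as $\alpha\to 0^+$ (to produce $\alpha_1$) and as $\alpha\to\infty$ (to produce $\alpha_2$). The surrounding notation is that of Section \ref{sec3}, so I work throughout with a minimizer $x_\alpha^\delta\in\mathcal L_\alpha^\delta$ and the discrepancy $G(x_\alpha^\delta)$.

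For the small-parameter endpoint I would compare $x_\alpha^\delta$ against the $\mathcal R$-minimum solution $x^{\dagger}$. Minimality of $x_\alpha^\delta$ gives
\[
\tfrac12\|F(x_\alpha^\delta)-y^\delta\|_Y^2+\alpha\,\mathcal R(x_\alpha^\delta)\le \tfrac12\|F(x^{\dagger})-y^\delta\|_Y^2+\alpha\,\mathcal R(x^{\dagger})=\tfrac12\|y-y^\delta\|_Y^2+\alpha\,\mathcal R(x^{\dagger}),
\]
using $F(x^{\dagger})=y$. Dropping the nonnegative term $\alpha\mathcal R(x_\alpha^\delta)$ and invoking $\|y-y^\delta\|_Y\le\tau_1\delta$ from Condition \ref{con2.1}(iv) yields $G(x_\alpha^\delta)\le\tfrac12(\tau_1\delta)^2+\alpha\,\mathcal R(x^{\dagger})$. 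Hence $\lim_{\alpha\to0^+}G(x_\alpha^\delta)\le\tfrac12\|y-y^\delta\|_Y^2$, and provided the bound $\|y-y^\delta\|_Y<\tau_1\delta$ is strict, choosing $\alpha_1>0$ small enough forces $G(x_{\alpha_1}^\delta)<\tfrac12(\tau_1\delta)^2$, i.e.\ $\|F(x_{\alpha_1}^\delta)-y^\delta\|_Y<\tau_1\delta$.

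For the large-parameter endpoint the heart of the argument is to show $x_\alpha^\delta\to0$ strongly as $\alpha\to\infty$. Comparing against $x=0$ and using $\mathcal R(0)=0$ gives $\alpha\,\mathcal R(x_\alpha^\delta)\le\tfrac12\|F(0)-y^\delta\|_Y^2$, so $\mathcal R(x_\alpha^\delta)\to0$. Coercivity (Condition \ref{con2.2}(ii)) then bounds $\{x_\alpha^\delta\}$, so along any sequence $\alpha\to\infty$ I may extract a weak limit; weak lower semicontinuity (Condition \ref{con2.2}(iii)) together with $\mathcal R(x)=0\Leftrightarrow x=0$ (Condition \ref{con2.2}(i)) identifies the limit as $0$, and the Radon--Riesz property (Condition \ref{con2.2}(iv)) upgrades this to strong convergence $x_\alpha^\delta\to0$. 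Continuity of $F$ (Condition \ref{con2.1}(i)) then gives $\|F(x_\alpha^\delta)-y^\delta\|_Y\to\|F(0)-y^\delta\|_Y$. Since Condition \ref{con2.1}(iv) provides $\|F(0)-y^\delta\|_Y\ge\tau_2\delta$, strictness of this bound lets me pick $\alpha_2$ large with $\|F(x_{\alpha_2}^\delta)-y^\delta\|_Y>\tau_2\delta$.

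I expect the main obstacle to be twofold. First, the convergence $x_\alpha^\delta\to0$ as $\alpha\to\infty$ must be handled with care because minimizers need not be unique; the extraction-of-subsequences argument has to be phrased so that it applies to any admissible choice $x_\alpha^\delta\in\mathcal L_\alpha^\delta$, and the full package of topological hypotheses on $\mathcal R$ in Condition \ref{con2.2} is exactly what makes the weak limit collapse to $0$ and then converge strongly. Second, because $G(x_\alpha^\delta)$ is monotone, its limits as $\alpha\to0^+$ and as $\alpha\to\infty$ coincide with its infimum and supremum, so the endpoint values $\tfrac12(\tau_1\delta)^2$ and $\tfrac12(\tau_2\delta)^2$ can be strictly crossed only if the corresponding bounds in Condition \ref{con2.1}(iv) are strict; the delicate point is therefore to secure the strict inequalities $\|y-y^\delta\|_Y<\tau_1\delta$ and $\|F(0)-y^\delta\|_Y>\tau_2\delta$ rather than merely their non-strict counterparts.
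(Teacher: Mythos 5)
Your proposal is correct and follows essentially the same route as the paper's argument: for small $\alpha$ compare $x_{\alpha}^{\delta}$ with the $\mathcal R$-minimum solution $x^{\dagger}$ to drive the discrepancy below $\tau_1\delta$, and for large $\alpha$ compare with $0$ so that $\mathcal{R}(x_{\alpha}^{\delta})\to 0$, extract a (weak, then strong) limit equal to $0$, and conclude from $\|F(0)-y^{\delta}\|_Y>\tau_2\delta$. Your closing caveat is also on target: the strict inequalities in the lemma do require reading Condition \ref{con2.1}(iv) strictly (as the paper itself implicitly does, e.g.\ in the proof of Theorem \ref{the3.10}, and as is unavoidable at the upper end since comparison with $0$ gives $\|F(x_{\alpha}^{\delta})-y^{\delta}\|_Y\le\|F(0)-y^{\delta}\|_Y$ for all $\alpha$), and the argument genuinely uses Condition \ref{con2.2} as well, even though the lemma nominally cites only Condition \ref{con2.1}.
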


%

\par Thanks to Conditions \ref{con2.1} and \ref{con2.2}, we have the following standard result, cf.\  \cite{EHN1996} for its proof.

\begin{lemma}\label{lem3.3}
Let $\alpha_{n}\rightarrow\alpha>0$ as $n\rightarrow\infty$. Denote by $x_{n}:=x_{\alpha_{n}}^{\delta}$ a minimizer of $\mathcal{J}_{\alpha_{n}}^{\delta}(x)$.
Then $\{x_{n}\}$ contains a convergent subsequence $\{x_{n_{k}}\}$
such that $x_{n_{k}}\rightarrow x_{\alpha}^{\delta}$ in $X$,
where $x_{\alpha}^{\delta}$ is a minimizer of $\mathcal{J}_{\alpha}^{\delta}(x)$.
\end{lemma}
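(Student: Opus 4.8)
The plan is to run the direct method of the calculus of variations, exactly as in the standard stability argument for Tikhonov regularization with a convex penalty; the only extra bookkeeping comes from the fact that the regularization parameter $\alpha_n$ varies with $n$ but converges to a positive limit. First I would establish boundedness of $\{x_n\}$ in $X$. Since $x_n$ minimizes $\mathcal{J}_{\alpha_n}^{\delta}$, comparing against the fixed competitor $z=0$ gives $\frac{1}{2}\|F(x_n)-y^{\delta}\|_Y^{2}+\alpha_n\mathcal{R}(x_n)\le\frac{1}{2}\|F(0)-y^{\delta}\|_Y^{2}+\alpha_n\mathcal{R}(0)$. Because $\alpha_n\to\alpha>0$, the sequence $\{\alpha_n\}$ is bounded and bounded away from $0$ for large $n$; dividing by $\alpha_n$ shows $\mathcal{R}(x_n)$ is bounded above, so coercivity (Condition~\ref{con2.2}(ii)) yields boundedness of $\{x_n\}$ in $X$. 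The same inequality also bounds $\{F(x_n)\}$ in $Y$.

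Next I would extract weak limits and identify the limit as a minimizer. By reflexivity of $X$ and $Y$, I pass to a subsequence (relabelled $x_{n_k}$) with $x_{n_k}\rightharpoonup\bar{x}$ in $X$ and $F(x_{n_k})\rightharpoonup w$ in $Y$. Weak sequential closedness of $F$ (Condition~\ref{con2.1}(ii)) forces $w=F(\bar{x})$, hence $F(x_{n_k})\rightharpoonup F(\bar{x})$. Taking $\liminf$ in the minimality inequality $\mathcal{J}_{\alpha_{n_k}}^{\delta}(x_{n_k})\le\mathcal{J}_{\alpha_{n_k}}^{\delta}(z)$, and using weak lower semicontinuity of the norm and of $\mathcal{R}$ (Condition~\ref{con2.2}(iii)) together with $\alpha_{n_k}\to\alpha$, I obtain $\mathcal{J}_{\alpha}^{\delta}(\bar{x})\le\mathcal{J}_{\alpha}^{\delta}(z)$ for every $z\in X$. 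Thus $\bar{x}$ minimizes $\mathcal{J}_{\alpha}^{\delta}$, and I set $x_{\alpha}^{\delta}:=\bar{x}$.

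The crux is promoting the weak convergence $x_{n_k}\rightharpoonup x_{\alpha}^{\delta}$ to strong convergence, and this is where the Radon--Riesz property (Condition~\ref{con2.2}(iv)) enters: it suffices to show $\mathcal{R}(x_{n_k})\to\mathcal{R}(x_{\alpha}^{\delta})$. The bound $\liminf_k\mathcal{R}(x_{n_k})\ge\mathcal{R}(x_{\alpha}^{\delta})$ is immediate from weak lsc. For the matching $\limsup$ I would test minimality against $z=x_{\alpha}^{\delta}$, giving $\frac{1}{2}\|F(x_{n_k})-y^{\delta}\|_Y^{2}+\alpha_{n_k}\mathcal{R}(x_{n_k})\le\frac{1}{2}\|F(x_{\alpha}^{\delta})-y^{\delta}\|_Y^{2}+\alpha_{n_k}\mathcal{R}(x_{\alpha}^{\delta})$, and then isolate the penalty term: $\mathcal{R}(x_{n_k})-\mathcal{R}(x_{\alpha}^{\delta})\le\alpha_{n_k}^{-1}\bigl(\frac{1}{2}\|F(x_{\alpha}^{\delta})-y^{\delta}\|_Y^{2}-\frac{1}{2}\|F(x_{n_k})-y^{\delta}\|_Y^{2}\bigr)$. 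Taking $\limsup$, and using $\alpha_{n_k}\to\alpha>0$ (so $\alpha_{n_k}^{-1}$ is bounded) together with $\liminf_k\|F(x_{n_k})-y^{\delta}\|_Y\ge\|F(x_{\alpha}^{\delta})-y^{\delta}\|_Y$ from weak lsc, I find the right-hand side has $\limsup\le0$. Combined with the $\liminf$ bound this forces $\mathcal{R}(x_{n_k})\to\mathcal{R}(x_{\alpha}^{\delta})$, and Radon--Riesz then delivers $x_{n_k}\to x_{\alpha}^{\delta}$ strongly in $X$.

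I expect the main obstacle to be the handling of the nonlinear $F$ under only weak convergence: one cannot pass to the limit in $F(x_{n_k})$ directly and must route through the weak convergence of $\{F(x_{n_k})\}$ in $Y$ and Condition~\ref{con2.1}(ii) to recover $F(x_{n_k})\rightharpoonup F(\bar{x})$. The second delicate point is the bookkeeping with the varying factor $\alpha_{n_k}$ in the final $\limsup$ estimate, where it is essential that $\alpha_{n_k}\to\alpha>0$ so that the reciprocal stays bounded and does not spoil the inequality.
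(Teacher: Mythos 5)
Your proof is correct and follows exactly the standard stability argument that the paper invokes by reference (it cites \cite{EHN1996} rather than giving a proof): comparison with a fixed competitor for boundedness, weak compactness plus weak sequential closedness of $F$, weak lower semicontinuity to identify the limit as a minimizer, and the $\limsup$--$\liminf$ squeeze on $\mathcal{R}(x_{n_k})$ combined with the Radon--Riesz property for strong convergence. Your handling of the varying parameter $\alpha_{n_k}\to\alpha>0$ is the only wrinkle beyond the classical fixed-$\alpha$ stability proof, and you treat it correctly.
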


\begin{proposition}\label{pro3.4}
The function $m(\alpha)$ is continuous with respect to $\alpha$.
\end{proposition}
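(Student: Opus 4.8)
The plan is to exploit the fact that, for each fixed $x$, the map $\alpha\mapsto \mathcal{J}_{\alpha}^{\delta}(x)=G(x)+\alpha\mathcal{R}(x)$ is affine in $\alpha$, so that $m(\alpha)=\min_{x\in X}\mathcal{J}_{\alpha}^{\delta}(x)$ is a pointwise infimum of a family of affine functions of $\alpha$ and is therefore concave on $(0,\infty)$. Since a finite concave function on an open interval of $\mathbb{R}$ is automatically continuous (indeed locally Lipschitz), the proposition follows once concavity and finiteness are established. I regard this concavity route as the cleanest argument, and I would present it first.

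To verify concavity, fix $\alpha_1,\alpha_2\in(0,\infty)$ and $t\in[0,1]$, and set $\alpha=t\alpha_1+(1-t)\alpha_2$. For every $x\in X$ one has $\mathcal{J}_{\alpha}^{\delta}(x)=t\,\mathcal{J}_{\alpha_1}^{\delta}(x)+(1-t)\,\mathcal{J}_{\alpha_2}^{\delta}(x)$, because $G(x)$ does not depend on $\alpha$ and $\alpha\mathcal{R}(x)$ is linear in $\alpha$. Taking the minimum over $x$ and using the elementary superadditivity $\min_x(f+g)\ge \min_x f+\min_x g$ gives $m(\alpha)\ge t\,m(\alpha_1)+(1-t)\,m(\alpha_2)$, which is exactly concavity. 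Finiteness is clear, since $0\le m(\alpha)\le \mathcal{J}_{\alpha}^{\delta}(0)=\frac12\|F(0)-y^{\delta}\|_Y^{2}<\infty$, where $\mathcal{R}(0)=0$ by Condition \ref{con2.2}(i). This already yields continuity on all of $(0,\infty)$.

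Alternatively, and perhaps more in line with the preceding lemmas, I would argue by semicontinuity, which also serves as a check. The upper estimate is immediate: for any minimizer $x_{\alpha}^{\delta}$ at $\alpha$ and any $\alpha_n\to\alpha$, one has $m(\alpha_n)\le \mathcal{J}_{\alpha_n}^{\delta}(x_{\alpha}^{\delta})=G(x_{\alpha}^{\delta})+\alpha_n\mathcal{R}(x_{\alpha}^{\delta})\to m(\alpha)$, so $\limsup_n m(\alpha_n)\le m(\alpha)$. The reverse inequality $\liminf_n m(\alpha_n)\ge m(\alpha)$ is the main obstacle, and it is where Lemma \ref{lem3.3} enters: passing to a subsequence realizing the $\liminf$, Lemma \ref{lem3.3} produces a further subsequence of minimizers $x_{n_k}\to x_{\alpha}^{\delta}$ strongly, whence $G(x_{n_k})\to G(x_{\alpha}^{\delta})$ by continuity of $F$ and $\liminf_k\mathcal{R}(x_{n_k})\ge\mathcal{R}(x_{\alpha}^{\delta})$ by the weak lower semicontinuity in Condition \ref{con2.2}(iii); combined with $\alpha_{n_k}\to\alpha>0$ this gives $\liminf_k m(\alpha_{n_k})\ge m(\alpha)$, contradicting the choice of the subsequence. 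The delicate point in this second route is the penalty term: one only has weak lower semicontinuity of $\mathcal{R}$, not continuity, so the argument must be arranged so that $\mathcal{R}$ contributes only through a $\liminf$ lower bound, which is precisely why the subsequence extraction and the positivity of the limit $\alpha$ are needed. The concavity route sidesteps this subtlety entirely.
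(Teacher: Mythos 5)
Your proof is correct, but your primary argument takes a genuinely different route from the paper's. The paper establishes the proposition by the sequential argument you offer only as an alternative: for $\alpha_n\to\alpha>0$ it takes minimizers $x_n:=x_{\alpha_n}^{\delta}$, invokes Lemma \ref{lem3.3} to extract a subsequence converging strongly to a minimizer $x_{\alpha}^{\delta}$, and combines the continuity of $F$ with the weak lower semicontinuity of $\mathcal{R}$ (plus the trivial upper estimate $m(\alpha_n)\le G(x_{\alpha}^{\delta})+\alpha_n\mathcal{R}(x_{\alpha}^{\delta})\to m(\alpha)$) to squeeze $m(\alpha_n)\to m(\alpha)$ --- essentially your second paragraph, including the correct observation that $\mathcal{R}$ may enter only through a $\liminf$ and that $\alpha>0$ is needed. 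Your concavity argument is cleaner and strictly more economical: $m$ is a pointwise infimum of the affine maps $\alpha\mapsto G(x)+\alpha\mathcal{R}(x)$, hence concave, and a real-valued concave function on the open interval $(0,\infty)$ is locally Lipschitz; this uses none of Condition \ref{con2.1} and almost none of Condition \ref{con2.2}, requires no existence or stability theory for minimizers, and even upgrades the conclusion from continuity to local Lipschitz continuity. What the paper's route buys instead is the stability of the minimizers themselves, which it needs anyway elsewhere (it underlies Lemma \ref{lem3.5} and the analysis around \eqref{equ3.1}); mere continuity of $m$ would not suffice for those purposes. Two pedantic points: your bound $0\le m(\alpha)$ tacitly assumes $\mathcal{R}\ge 0$, which Condition \ref{con2.2} never literally states (though it is clearly intended); if one is scrupulous, $m(\alpha)>-\infty$ follows because a proper, coercive, weakly lower semicontinuous $\mathcal{R}$ on a reflexive space is bounded below, and real-valuedness is all the concavity argument needs. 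The same lower bound on $\mathcal{R}$ is what licenses the step $\liminf_k \alpha_{n_k}\mathcal{R}(x_{n_k})\ge\alpha\,\liminf_k\mathcal{R}(x_{n_k})$ in your second route.
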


\par Although $m(\alpha)$ is continuous, the functions $G(x_{\alpha}^{\delta})$ and $\mathcal{R}(x_{\alpha}^{\delta})$ are not necessarily continuous with respect to $\alpha$. If \eqref{equ2.4} does not hold for any parameter $\alpha$, then $\|F(x_{\alpha}^{\delta})-y^{\delta}\|_{Y}$ has a jump at a certain parameter. Actually, we have the following lemma, which can be proved similar to that of Theorem 2.3 in \cite{TLY1998}.

\begin{lemma}\label{lem3.5}
Denote $G(x):=\|F(x)-y^{\delta}\|_Y$. For each ${\alpha}>0$, there exist $x_{\alpha}^{\delta,1}, x_{\alpha}^{\delta,2}\in \mathcal{L}^{\delta}_{{\alpha}}$ such that
\begin{align*}
    G(x_{\alpha}^{\delta,1})=\inf\limits_{x\in \mathcal{L}^{\delta}_{{\alpha}}}G(x),\quad G(x_{\alpha}^{\delta,2})=\sup\limits_{x\in \mathcal{L}^{\delta}_{{\alpha}}}G(x).
\end{align*}
\end{lemma}

\par A minimizer of $\mathcal{J}_{\alpha}^{\delta}\left(x\right)$ may be non-unique. Hence, the inequality \eqref{equ2.4} may not hold. More precisely, there may exist a regularization parameter $\alpha$ such that
\begin{equation}\label{equ3.1}
\|F(x_{\alpha}^{\delta,1})-y^{\delta}\|_{Y}<\tau_1\delta<\tau_2\delta<\|F(x_{\alpha}^{\delta,2})-y^{\delta}\|_{Y}.
\end{equation}
To prove the existence of a regularization parameter $\alpha$ satisfying \eqref{equ2.4}, it is sufficient to ensure that there are no minimizers $x_{\alpha}^{\delta,1}$, $x_{\alpha}^{\delta,2}$ of $\mathcal{J}_{\alpha}^{\delta}(x)$ for \eqref{equ3.1} to be valid. It can be shown that if there is no parameter $\alpha$ such that \eqref{equ3.1} holds, then there exists $\alpha$ satisfying \eqref{equ2.4}. The proof is similar to that of Theorem 3.10 in \cite{AR10}.

\begin{lemma}\label{lem3.6}
Assume Conditions \ref{con2.1} and \ref{con2.2}.  If there is no $\alpha>0$ with minimizers $x_{\alpha}^{\delta,1},x_{\alpha}^{\delta,2}\in \mathcal{L}_{{\alpha}}^{\delta}$ such that \eqref{equ3.1} is valid, then there exist $\alpha=\alpha(\delta,y^{\delta})>0$ and $x_{\alpha}^{\delta}\in\mathcal{L}_{\alpha}^{\delta}$
such that \eqref{equ2.4} holds.
\end{lemma}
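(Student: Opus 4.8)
The plan is to locate a threshold parameter $\alpha^*$ at which the smallest discrepancy among minimizers transitions across $\tau_1\delta$, and then to exploit the no-gap hypothesis to pin the discrepancy inside $[\tau_1\delta,\tau_2\delta]$. Using Lemma \ref{lem3.5}, for each $\alpha>0$ I would write $g_-(\alpha):=\|F(x_{\alpha}^{\delta,1})-y^{\delta}\|_Y=\inf_{x\in\mathcal{L}_{\alpha}^{\delta}}\|F(x)-y^{\delta}\|_Y$ and $g_+(\alpha):=\|F(x_{\alpha}^{\delta,2})-y^{\delta}\|_Y=\sup_{x\in\mathcal{L}_{\alpha}^{\delta}}\|F(x)-y^{\delta}\|_Y$. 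A short computation comparing the functionals $\mathcal{J}_{\alpha}^{\delta}$ and $\mathcal{J}_{\alpha'}^{\delta}$ at their respective minimizers (the content of Lemma \ref{lem3.1}) shows that $g_+(\alpha)\le g_-(\alpha')$ whenever $\alpha<\alpha'$; in particular the sublevel set $S:=\{\alpha>0: g_-(\alpha)<\tau_1\delta\}$ is a down-set. By Lemma \ref{lem3_2} there is a small parameter with discrepancy below $\tau_1\delta$, so $S\neq\emptyset$, and a large parameter $\alpha_2$ with $g_+(\alpha_2)>\tau_2\delta$; monotonicity then forces $g_-(\alpha)>\tau_1\delta$ for every $\alpha>\alpha_2$, so $S$ is bounded above. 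Hence $\alpha^*:=\sup S$ lies in $(0,\infty)$.

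Next I would pass to the limit at $\alpha^*$ from both sides using the subsequential convergence of minimizers in Lemma \ref{lem3.3}, together with the continuity of $F$ from Condition \ref{con2.1}(i), which lets the discrepancy norm pass through the strong limit. Taking $\alpha_n\uparrow\alpha^*$ with $\alpha_n\in S$ and the associated $g_-$-minimizers, a convergent subsequence yields a minimizer $\hat{x}\in\mathcal{L}_{\alpha^*}^{\delta}$ with $\|F(\hat{x})-y^{\delta}\|_Y\le\tau_1\delta$, so $g_-(\alpha^*)\le\tau_1\delta$. Taking $\beta_n\downarrow\alpha^*$ with $\beta_n\notin S$ (so $g_-(\beta_n)\ge\tau_1\delta$) and their $g_-$-minimizers, a convergent subsequence yields a minimizer $\tilde{x}\in\mathcal{L}_{\alpha^*}^{\delta}$ with $\|F(\tilde{x})-y^{\delta}\|_Y\ge\tau_1\delta$.

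It then remains to produce a single minimizer whose discrepancy lies in $[\tau_1\delta,\tau_2\delta]$, and here I would split on the value of $g_-(\alpha^*)$. If $g_-(\alpha^*)=\tau_1\delta$, the minimizer $x_{\alpha^*}^{\delta,1}$ already satisfies \eqref{equ2.4}. If $g_-(\alpha^*)<\tau_1\delta$, then the hypothesis applied at $\alpha=\alpha^*$ rules out $g_+(\alpha^*)>\tau_2\delta$, since that would be precisely \eqref{equ3.1}; hence $g_+(\alpha^*)\le\tau_2\delta$, and combining this with the lower bound for $\tilde{x}$ gives $\tau_1\delta\le\|F(\tilde{x})-y^{\delta}\|_Y\le g_+(\alpha^*)\le\tau_2\delta$, so $\tilde{x}$ satisfies \eqref{equ2.4}. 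In either case the pair $(\alpha^*,x_{\alpha^*}^{\delta})$ is the required one.

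I expect the main obstacle to be the two limit passages at $\alpha^*$: because $G(x_{\alpha}^{\delta})$ may jump where minimizers are non-unique, one cannot simply evaluate a continuous function at $\alpha^*$, and one must instead track the minimizers achieving the extremal discrepancies (Lemma \ref{lem3.5}) along sequences $\alpha_n\to\alpha^*$ and use the compactness and closedness in Lemma \ref{lem3.3} to land inside $\mathcal{L}_{\alpha^*}^{\delta}$ while controlling the limiting discrepancy from each side. The no-gap hypothesis is exactly what converts the one-sided bounds $g_-(\alpha^*)\le\tau_1\delta$ and $g_+(\alpha^*)\le\tau_2\delta$ into membership in $[\tau_1\delta,\tau_2\delta]$; checking that it applies at the single parameter $\alpha^*$, and not merely in a limiting sense, is the delicate point.
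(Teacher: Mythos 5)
Your proposal is correct and follows essentially the same route as the paper, which gives no proof of its own but defers to Theorem 3.10 of \cite{AR10}: that proof likewise uses the monotonicity of the extremal discrepancies $g_\pm$ from Lemmas \ref{lem3.1} and \ref{lem3.5}, locates the threshold $\alpha^*=\sup\{\alpha: g_-(\alpha)<\tau_1\delta\}$ bracketed via Lemma \ref{lem3_2}, passes to limits from both sides with the stability of minimizers (Lemma \ref{lem3.3}) and continuity of $F$, and invokes the no-gap hypothesis at $\alpha^*$ to place a minimizer's discrepancy in $[\tau_1\delta,\tau_2\delta]$. Your two-sided limit argument and case split at $g_-(\alpha^*)$ are sound, so no gaps to report.
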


\par Next we prove an estimate for $\|F(x_{\alpha}^{\delta,2})-y^{\delta}\|_{Y}$ if $x_{\alpha}^{\delta,1}$, $x_{\alpha}^{\delta,2}$ satisfy \eqref{equ3.1}.

\begin{lemma}\label{lem3.7}
Assume $x_{\alpha}^{\delta,1}$, $x_{\alpha}^{\delta,2}$ satisfy \eqref{equ3.1}. Then
    \[\tau_2\delta<\|F(x_{\alpha}^{\delta,2})-y^{\delta}\|_{Y}\le\tau_1 \left(3+2\gamma\right) \delta.\]
\end{lemma}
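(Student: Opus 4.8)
The lower bound $\tau_2\delta<\|F(x_{\alpha}^{\delta,2})-y^{\delta}\|_{Y}$ is nothing but the right-hand inequality in the hypothesis \eqref{equ3.1}, so the entire task is to establish the upper bound. Writing $r_i:=F(x_{\alpha}^{\delta,i})-y^{\delta}$ for $i=1,2$, the plan is to exploit the one structural fact not yet used: both $x_{\alpha}^{\delta,1}$ and $x_{\alpha}^{\delta,2}$ minimize the \emph{same} functional $\mathcal{J}_{\alpha}^{\delta}$ at the same parameter $\alpha$, so they attain the common minimal value $m(\alpha)$. Equating $\mathcal{J}_{\alpha}^{\delta}(x_{\alpha}^{\delta,1})=\mathcal{J}_{\alpha}^{\delta}(x_{\alpha}^{\delta,2})$ gives
\[ \tfrac12\|r_2\|_Y^2-\tfrac12\|r_1\|_Y^2=\alpha\big(\mathcal{R}(x_{\alpha}^{\delta,1})-\mathcal{R}(x_{\alpha}^{\delta,2})\big). \]
This identity is the crux: it converts the penalty difference, which is hard to control directly, into precisely the discrepancy gap we wish to bound.

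Next I would bound the right-hand side from above using the first order necessary condition of Lemma \ref{lem2.5}. Applying \eqref{equ2.5} at the base point $\hat{x}=x_{\alpha}^{\delta,1}$ with test element $z=x_{\alpha}^{\delta,2}$ yields
\[ \alpha\big(\mathcal{R}(x_{\alpha}^{\delta,1})-\mathcal{R}(x_{\alpha}^{\delta,2})\big)\le\big\langle F'(x_{\alpha}^{\delta,1})^*(r_1),\,x_{\alpha}^{\delta,2}-x_{\alpha}^{\delta,1}\big\rangle=\big\langle r_1,\,F'(x_{\alpha}^{\delta,1})(x_{\alpha}^{\delta,2}-x_{\alpha}^{\delta,1})\big\rangle_Y. \]
A Cauchy--Schwarz estimate, combined with the consequence $\|F'(x_1)(x_2-x_1)\|_Y\le(1+\gamma)\|F(x_2)-F(x_1)\|_Y$ of the tangential cone condition \eqref{equ2.2} (applicable since $x_{\alpha}^{\delta,1},x_{\alpha}^{\delta,2}\in\mathcal{L}_{\alpha}^{\delta}$), bounds this by $(1+\gamma)\|r_1\|_Y\|r_2-r_1\|_Y$, and the triangle inequality gives $\|r_2-r_1\|_Y\le\|r_1\|_Y+\|r_2\|_Y$.

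Collecting these and abbreviating $a:=\|r_1\|_Y$, $b:=\|r_2\|_Y$, I arrive at $\tfrac12(b^2-a^2)\le(1+\gamma)\,a\,(a+b)$, that is $\tfrac12(b-a)(b+a)\le(1+\gamma)\,a\,(a+b)$. Since $a+b>0$ the common factor cancels cleanly, leaving the linear inequality $b-a\le 2(1+\gamma)a$, hence $b\le(3+2\gamma)a$. Finally, the hypothesis $a=\|r_1\|_Y<\tau_1\delta$ from \eqref{equ3.1} delivers $\|F(x_{\alpha}^{\delta,2})-y^{\delta}\|_Y\le\tau_1(3+2\gamma)\delta$, as claimed.

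I expect the only genuinely delicate point to be the second step: using the variational inequality \eqref{equ2.5} in the correct direction (with $x_{\alpha}^{\delta,1}$, \emph{not} $x_{\alpha}^{\delta,2}$, as the base point) so that the penalty difference is bounded from above rather than below, and interpreting the pairing $\langle F'(\hat{x})(\cdot),\cdot\rangle$ through the adjoint so that Cauchy--Schwarz applies in $Y$. Everything downstream is elementary algebra; the non-obvious ingredient is simply the observation in the first paragraph that two distinct minimizers of $\mathcal{J}_{\alpha}^{\delta}$ share a common objective value, which is what ties $\|r_1\|_Y$ and $\|r_2\|_Y$ together.
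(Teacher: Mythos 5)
Your proposal is correct and follows essentially the same route as the paper's proof: the same identity obtained by equating the two minimizers' objective values, Lemma \ref{lem2.5} applied at the base point $x_{\alpha}^{\delta,1}$ with $z=x_{\alpha}^{\delta,2}$, the tangential cone condition \eqref{equ2.2}, and the triangle inequality. The only difference is cosmetic, in the final algebra: you cancel the common factor $a+b$ to reach the linear bound $b\le(3+2\gamma)a$ before invoking $a<\tau_1\delta$, whereas the paper substitutes $a\le\tau_1\delta$ first and resolves the resulting quadratic inequality in $b$; both give the same conclusion.
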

\begin{proof}
Denote $x_{1}:=x_{\alpha}^{\delta,1}$,
$x_{2}:=x_{\alpha}^{\delta,2}$.
From Lemma \ref{lem2.5}, we have
\begin{equation}\label{equ3.2}
    \left\langle F'(x_1)^*\left(F(x_1)-y^{\delta}\right),z-x_1\right\rangle\geq\alpha\mathcal{R}(x_1)- \alpha\mathcal{R}(z)\quad\forall\, z\in X.
 \end{equation}
Thus,
    \[\left\langle F'(x_1)^*\left(F(x_1)-y^{\delta}\right), x_2-x_{1}\right\rangle\geq \alpha\mathcal{R}(x_1)-\alpha\mathcal{R}(x_2)\label{equnew2},\]
which can be rewritten as
\begin{align*}
\alpha\mathcal{R}(x_1)-\alpha\mathcal{R}(x_2) \le \|F(x_{1})-y^{\delta}\|_{Y}\|F'(x_{1})(x_2-x_1)\|_{Y}.
\end{align*}
It follows from Condition \ref{con2.1} (iii) that
\begin{align}\label{equ3.3}
    \alpha\mathcal{R}(x_1)-\alpha\mathcal{R}(x_2)\le  \|F(x_{1})-y^{\delta}\|_{Y}\left(1+\gamma\right)\|F(x_2)-F(x_1)\|_Y.
\end{align}
In addition,
\begin{align}\label{equ3.4}
\frac{1}{2}\|F(x_{2})-y^{\delta}\|_{Y}^{2} &
=\frac{1}{2}\|F(x_{2})-y^{\delta}\|_{Y}^{2}+\alpha\mathcal{R}(x_2)-\alpha\mathcal{R}(x_2)\nonumber\\
&=\frac{1}{2} \|F(x_{1})-y^{\delta}\|_{Y}^{2}+\alpha\mathcal{R}(x_1)-\alpha\mathcal{R}(x_2).
\end{align}
A combination of \eqref{equ3.3} and \eqref{equ3.4} leads to
\begin{align}\label{equ3.5}
    \frac{1}{2}\|F(x_{2})-y^{\delta}\|_{Y}^{2}
    &\le\frac{1}{2}\tau_1^2\delta^{2}+\tau_1\delta(1+\gamma)\|F(x_{2})-y^{\delta}+y^{\delta}-F(x_1)\|_{Y}\nonumber\\
    &\le\frac{2\gamma+3}{2}\tau_1^2\delta^{2}+\tau_1\delta(1+\gamma)\|F(x_2)-y^{\delta}\|_{Y},
\end{align}
which is rewritten as
\begin{align}\label{equ3.6}
    \|F(x_{2})-y^{\delta}\|_{Y}^{2}-(2+2\gamma)\tau_1\delta\|F(x_2)-y^{\delta}\|_{Y} -(2\gamma+3)\tau_1^2\delta^{2}\leq 0.
\end{align}
The inequality \eqref{equ3.6} implies that 
\[-\tau_1\delta\le\|F(x_{2})-y^{\delta}\|_{Y}\le(2\gamma+3)\tau_1\delta.\]
This proves the lemma.
\end{proof}

\par From Lemmas \ref{lem3.6} and \ref{lem3.7}, we deduce the following result.

\begin{theorem}\label{the3.8}
Assume Conditions \ref{con2.1} and \ref{con2.2}. Then there exist a regularization parameter $\alpha$ and a minimizer $x_{\alpha}^{\delta}\in\mathcal{L}_{{\alpha}}^{\delta}$ such that
\begin{align}\label{equ3.7}
    \tau_1\delta\le\|F(x_{\alpha}^{\delta})-y^{\delta}\|_{Y}\le c\,\delta, \quad c:=\max\left\{ \tau_2,(3+2\gamma)\tau_1 \right\}.
\end{align}
\end{theorem}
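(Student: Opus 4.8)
The plan is to derive Theorem~\ref{the3.8} as an immediate consequence of the two preceding lemmas, splitting the argument into the two mutually exclusive cases that govern whether a ``discrepancy jump'' occurs. First I would invoke Lemma~\ref{lem3.6}: either there exists a parameter $\alpha>0$ together with minimizers $x_{\alpha}^{\delta,1},x_{\alpha}^{\delta,2}\in\mathcal{L}_{\alpha}^{\delta}$ satisfying the strict jump inequality \eqref{equ3.1}, or no such $\alpha$ exists, in which case Lemma~\ref{lem3.6} directly produces a parameter $\alpha$ and a minimizer $x_{\alpha}^{\delta}$ for which the standard two-sided bound \eqref{equ2.4} holds, i.e.\ $\tau_1\delta\le\|F(x_{\alpha}^{\delta})-y^{\delta}\|_Y\le\tau_2\delta$. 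Since $\tau_2\le c$ by definition of $c$, this case already yields \eqref{equ3.7}.

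In the complementary case, where \eqref{equ3.1} does hold for some $\alpha$, I would apply Lemma~\ref{lem3.7} to this very configuration. That lemma gives the refined upper bound $\|F(x_{\alpha}^{\delta,2})-y^{\delta}\|_Y\le(3+2\gamma)\tau_1\delta$, while the defining inequality \eqref{equ3.1} supplies the matching lower bound $\|F(x_{\alpha}^{\delta,2})-y^{\delta}\|_Y>\tau_2\delta\ge\tau_1\delta$. Taking $x_{\alpha}^{\delta}:=x_{\alpha}^{\delta,2}$, we obtain
\[
    \tau_1\delta\le\|F(x_{\alpha}^{\delta})-y^{\delta}\|_Y\le(3+2\gamma)\tau_1\delta\le c\,\delta,
\]
again because $(3+2\gamma)\tau_1\le c$. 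Thus in both cases the selected minimizer satisfies \eqref{equ3.7}, and the theorem follows.

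The only genuinely delicate point is the logical dichotomy itself, and that has already been packaged into Lemma~\ref{lem3.6}, whose proof (deferred to the reference \cite{AR10}) exploits the continuity of $m(\alpha)$ from Proposition~\ref{pro3.4} together with the monotonicity and sup/inf structure of the discrepancy from Lemmas~\ref{lem3.1} and~\ref{lem3.5}. Given those lemmas, the present theorem is essentially bookkeeping: I would simply verify that the upper bound produced in each branch does not exceed $c\delta=\max\{\tau_2,(3+2\gamma)\tau_1\}\,\delta$. I anticipate no analytic obstacle here; the substantive work lies entirely in the quadratic-inequality estimate behind Lemma~\ref{lem3.7} and in the jump-characterization behind Lemma~\ref{lem3.6}, both of which are already available. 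The main thing to state carefully is that the existence claim is fulfilled by exhibiting the \emph{specific} minimizer ($x_{\alpha}^{\delta,2}$ in the jump case) rather than asserting \eqref{equ2.4} for an arbitrary element of $\mathcal{L}_{\alpha}^{\delta}$.
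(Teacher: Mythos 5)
Your proposal is correct and matches the paper's own proof essentially verbatim: the same dichotomy on whether the jump inequality \eqref{equ3.1} occurs for some $\alpha$, with Lemma~\ref{lem3.7} handling the jump case (the paper, like you, implicitly selects $x_{\alpha}^{\delta,2}$ as the minimizer) and Lemma~\ref{lem3.6} handling the other case via \eqref{equ2.4}, followed by the observation that both bounds are dominated by $c\delta$. Your explicit remark that the minimizer exhibited in the jump case is the specific element $x_{\alpha}^{\delta,2}$ is a welcome clarification of a point the paper leaves tacit, but it is not a deviation in approach.
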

\begin{proof}
By Lemma \ref{lem3.7}, if there exists a regularization parameter $\alpha$ such that $x_{\alpha}^{\delta,1}$, $x_{\alpha}^{\delta,2}$ satisfy \eqref{equ3.1}, then
    \[\tau_2\delta<\|F(x_{\alpha}^{\delta,2})-y^{\delta}\|_{Y}\leq \tau_1(3+2\gamma)\delta. \]
This implies that there exists a regularization parameter $\alpha$ such that \eqref{equ3.7} holds.
On the contrary, if there is no regularization parameter $\alpha$ with minimizers $x_{\alpha}^{\delta,1}$, $x_{\alpha}^{\delta,2}$ such that \eqref{equ3.1} is valid, by Lemma \ref{lem3.6}, there exist $\alpha$ and $x_{\alpha}^{\delta}\in\mathcal{L}_{\alpha}^{\delta}$
such that \eqref{equ2.4} holds. Obviously, \eqref{equ2.4} implies \eqref{equ3.7}. Hence, there exists a regularization parameter $\alpha$ such that \eqref{equ3.7} holds, which proves the theorem.
\end{proof}

\par In the following, we will show that for MDP, $\alpha\equiv \alpha(\delta,y^{\delta})\rightarrow 0$ as the noise level $\delta\rightarrow 0$.

\begin{lemma}\label{lem3.9}
Let $\alpha>0$ be fixed. If $F(x^*)=y$ and
    \[x^*\in \mathop{\arg\min}_{x\in X}\left\{\frac{1}{2}\|F(x)-y\|_Y^2+\alpha\mathcal{R}(x)\right\},\]
then $x^*=0$.
\end{lemma}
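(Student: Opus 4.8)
The plan is to exploit the first-order optimality condition satisfied by $x^*$ together with the fact that the hypothesis $F(x^*)=y$ makes the data-misfit contribution disappear. Since $x^*$ minimizes the exact-data functional $\frac12\|F(x)-y\|_Y^2+\alpha\mathcal{R}(x)$, Lemma \ref{lem2.5} applies verbatim with the clean datum $y$ in place of $y^{\delta}$ (the derivation of that necessary condition uses no property distinguishing $y^{\delta}$ from $y$), yielding the variational inequality
\[
\langle F'(x^*)^*(F(x^*)-y),\,z-x^*\rangle\ge \alpha\mathcal{R}(x^*)-\alpha\mathcal{R}(z)\qquad\forall\,z\in X.
\]
The decisive observation is that $F(x^*)=y$ forces $F(x^*)-y=0$, so the left-hand side vanishes identically in $z$.

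Once the left-hand side is gone, I would divide by $\alpha>0$ to obtain $\mathcal{R}(x^*)\le\mathcal{R}(z)$ for every $z\in X$; that is, $x^*$ is a global minimizer of the penalty $\mathcal{R}$. Specializing to $z=0$ and using $\mathcal{R}(0)=0$ from Condition \ref{con2.2}(i) then gives the one-sided bound $\mathcal{R}(x^*)\le 0$.

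It remains to upgrade $\mathcal{R}(x^*)\le0$ to $\mathcal{R}(x^*)=0$, after which the ``$\mathcal{R}(x)=0$ iff $x=0$'' clause of Condition \ref{con2.2}(i) delivers $x^*=0$ immediately. This final step is where I expect the only real friction, since the nonnegativity of $\mathcal{R}$ (equivalently, that $0$ is the global minimizer of $\mathcal{R}$) is not stated verbatim among the hypotheses. I would establish it from the structural assumptions in Condition \ref{con2.2}: if $\mathcal{R}(x_0)<0$ for some $x_0\ne 0$, then the map $t\mapsto\mathcal{R}(tx_0)$ is convex with value $0$ at $t=0$, strictly negative at $t=1$, and tending to $+\infty$ as $t\to\infty$ by the coercivity of Condition \ref{con2.2}(ii); hence it must return to the value $0$ at some $t_1>0$ with $t_1x_0\ne 0$, contradicting the ``iff'' clause of Condition \ref{con2.2}(i). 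Thus $\mathcal{R}\ge 0$, so $\mathcal{R}(x^*)=0$ and $x^*=0$. If instead one takes $\mathcal{R}\ge 0$ as part of the standing assumptions, this last paragraph collapses to a single line.
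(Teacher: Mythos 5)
Your proof is correct, but it takes a genuinely different route from the paper's. The paper argues directly from minimality: it compares the functional at $x^*$ with its value at the scaled point $(1-t)x^*$, uses $\mathcal{R}((1-t)x^*)=(1-t)\mathcal{R}(x^*)$ (for a general convex penalty with $\mathcal{R}(0)=0$ this holds in general only as the inequality $\mathcal{R}((1-t)x^*)\le(1-t)\mathcal{R}(x^*)$, which suffices), then expands $F((1-t)x^*)-y=-tF'(x^*)x^*+o(t)$ by Fr\'echet differentiability and lets $t\to 0^+$ to conclude $\mathcal{R}(x^*)=0$. You instead invoke the variational inequality of Lemma \ref{lem2.5} with the exact datum $y$ in place of $y^{\delta}$ --- legitimate, as you note, since its derivation uses nothing special about $y^{\delta}$ --- and observe that $F(x^*)=y$ annihilates the left-hand side, yielding $\mathcal{R}(x^*)\le\mathcal{R}(z)$ for all $z$ and in particular $\mathcal{R}(x^*)\le\mathcal{R}(0)=0$. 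In effect the same scaling-and-limit computation lives inside the proof of Lemma \ref{lem2.5}, so the two arguments are cousins; yours buys brevity, reuse of an established lemma, the stronger conclusion that $x^*$ globally minimizes $\mathcal{R}$, and it sidesteps the paper's homogeneity-looking equality, while the paper's version is self-contained, carrying out the limit explicitly in the single direction $z=0$.

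One caveat on your final paragraph. The intermediate-value argument for $\mathcal{R}\ge 0$ is sound when $\mathcal{R}$ is finite-valued, since a convex function on $\mathbb{R}$ that is finite everywhere is continuous; but Condition \ref{con2.2} only assumes $\mathcal{R}$ proper, and an extended-real-valued convex penalty can jump from a negative value to $+\infty$ without ever crossing $0$: in one dimension, $\mathcal{R}(x)=-x$ for $x\in[0,2]$ and $\mathcal{R}(x)=+\infty$ otherwise is convex and satisfies all four items of Condition \ref{con2.2}. So nonnegativity is genuinely an implicit standing assumption rather than a consequence of the stated hypotheses --- indeed the paper's own proof writes $0\le\alpha\mathcal{R}(x^*)$ without justification, and Definition \ref{def2.3} already declares the Bregman distance to take values in $\mathbb{R}^+$. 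You flagged exactly this friction and offered the fallback of assuming $\mathcal{R}\ge 0$ outright, so the proposal stands; just be aware that the repair you sketch covers the finite-valued case only.
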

\begin{proof}
By the assumption, we have
\begin{align}\label{equ3.8}
    \alpha\mathcal{R}(x^*)&=\frac{1}{2}\|F(x^*)-y\|_Y^2+\alpha\mathcal{R}(x^*)\nonumber\\
    & \le \frac{1}{2}\|F((1-t)x^*)-y\|_Y^2+\alpha\mathcal{R}((1-t)x^*)\quad\forall\,t\in (0,1).
\end{align}
Since $\mathcal{R}((1-t)x^*)=(1-t)\mathcal{R}(x^*)$, we can rewrite \eqref{equ3.8} as
\[ 0\le \alpha\mathcal{R}(x^*) \le \frac{1}{2}\frac{\|F((1-t)x^*)-y\|_Y^2}{t}\quad\forall\,t\in (0,1).\]
Since
    \[ F((1-t)x^*)-y = F(x^*)+F'(x^*)(-tx^*)+o(\|-tx^*\|_{\ell_2})-y = F'(x^*)(-tx^*)+o(t),\]
we have
    \[ 0\le \alpha\mathcal{R}(x^*) \le \frac{t}{2}\,\|F'(x^*)x^*+o(t)\|_Y^2\quad\forall\,t\in (0,1).\]
Then take the limit as $t\to 0^+$ to obtain $\mathcal{R}(x^*)=0$.  By Condition \ref{con2.2}, $\mathcal{R}(x^*)=0$ implies that $x^*=0$.
\end{proof}

\begin{theorem}\label{the3.10}
Let $\delta_n\rightarrow 0$ and $y^{{\delta}_n}\rightarrow y$ as $n\rightarrow \infty$.  Let $\alpha_n:=\alpha(\delta_n, y^{\delta_n})$ be the regularization parameter obtained from MDP \eqref{equ3.7} with $\delta$ replaced by ${\delta_n}$.  Then $\alpha_n\rightarrow 0$ as $n\rightarrow \infty$.
\end{theorem}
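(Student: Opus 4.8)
The plan is to argue by contradiction, so I would suppose $\alpha_n\not\to 0$ and pass to a subsequence (not relabeled) with $\alpha_n\ge\varepsilon$ for some $\varepsilon>0$ and all $n$; write $x_n:=x_{\alpha_n}^{\delta_n}$. The first task is to extract a weak limit. Comparing the minimizer against $x=0$ in \eqref{equ1.2} and using $\mathcal{R}(0)=0$ gives $\alpha_n\mathcal{R}(x_n)\le\tfrac12\|F(0)-y^{\delta_n}\|_Y^2$; since $\|F(0)-y^{\delta_n}\|_Y\le\|F(0)-y\|_Y+\tau_1\delta_n$ is bounded and $\alpha_n\ge\varepsilon$, the numbers $\mathcal{R}(x_n)$ are bounded. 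Coercivity (Condition \ref{con2.2}(ii)) then bounds $\{x_n\}$ in the reflexive space $X$, so along a further subsequence $x_n\rightharpoonup\bar x$. Moreover, the upper bound in \eqref{equ3.7} gives $\|F(x_n)-y^{\delta_n}\|_Y\le c\,\delta_n\to0$, and since $y^{\delta_n}\to y$ we obtain $F(x_n)\to y$ strongly, hence weakly; weak sequential closedness (Condition \ref{con2.1}(ii)) then yields $F(\bar x)=y$.

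Next I would extract a subsequence along which $\alpha_n\to\bar\alpha\in[\varepsilon,\infty]$ and split into two cases. In the case $\bar\alpha<\infty$, I would pass to the $\liminf$ in the minimality inequality $\mathcal{J}_{\alpha_n}^{\delta_n}(x_n)\le\mathcal{J}_{\alpha_n}^{\delta_n}(x)$ for an arbitrary fixed $x$. On the left the discrepancy term tends to $0$ (because $F(x_n)\to y$ and $y^{\delta_n}\to y$), while $\alpha_n\to\bar\alpha>0$ together with the nonnegativity and weak lower semicontinuity of $\mathcal{R}$ (Condition \ref{con2.2}(iii)) gives $\liminf_n\alpha_n\mathcal{R}(x_n)\ge\bar\alpha\,\mathcal{R}(\bar x)$; on the right the whole expression converges to $\tfrac12\|F(x)-y\|_Y^2+\bar\alpha\,\mathcal{R}(x)$. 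This shows that $\bar x$ minimizes $\tfrac12\|F(\cdot)-y\|_Y^2+\bar\alpha\,\mathcal{R}(\cdot)$; since also $F(\bar x)=y$, Lemma \ref{lem3.9} forces $\bar x=0$.

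In the remaining case $\bar\alpha=\infty$, the comparison against $x=0$ now gives $\mathcal{R}(x_n)\le\tfrac{1}{2\alpha_n}\|F(0)-y^{\delta_n}\|_Y^2\to0$, so weak lower semicontinuity yields $\mathcal{R}(\bar x)=0$ and Condition \ref{con2.2}(i) again gives $\bar x=0$. Thus in either case $F(0)=F(\bar x)=y$, whence $\|F(0)-y^{\delta_n}\|_Y=\|y-y^{\delta_n}\|_Y\le\tau_1\delta_n$. Since Condition \ref{con2.1}(iv) demands $\tau_2\delta_n\le\|F(0)-y^{\delta_n}\|_Y$ with $\delta_n>0$ and $\tau_1<\tau_2$, this is a contradiction, and therefore $\alpha_n\to0$.

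The step I expect to be the main obstacle is the limit passage in the finite-$\bar\alpha$ case: one has only the weak convergence $x_n\rightharpoonup\bar x$ and the mere lower semicontinuity of $\mathcal{R}$, so one must combine these with the strong convergence $F(x_n)\to y$ (coming from the MDP upper bound) to ensure that the $\liminf$ of the left-hand side genuinely dominates $\tfrac12\|F(\bar x)-y\|_Y^2+\bar\alpha\,\mathcal{R}(\bar x)$ while the right-hand side converges exactly. Getting this inequality in the correct direction is precisely what is needed to conclude that $\bar x$ minimizes the noise-free functional, so that Lemma \ref{lem3.9} becomes applicable.
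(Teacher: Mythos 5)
Your proof is correct, but it takes a genuinely different route from the paper's. The paper never extracts a limit of the parameters: it fixes the lower bound $\alpha_0$ of the subsequence, introduces auxiliary minimizers $x_n^{\alpha_0}\in\mathop{\arg\min}_{x\in X}\left\{\frac{1}{2}\|F(x)-y^{\delta_n}\|_Y^2+\alpha_0\mathcal{R}(x)\right\}$ at the \emph{fixed} parameter $\alpha_0$, and uses the monotonicity of the discrepancy in $\alpha$ (Lemma \ref{lem3.1}) to transfer the MDP bound $\|F(x_n)-y^{\delta_n}\|_Y\le c\,\delta_n$ from $x_n=x_{\alpha_n}^{\delta_n}$ down to $x_n^{\alpha_0}$, cf.\ \eqref{equ3.9}; the lower-semicontinuity passage \eqref{equ3.11} is then carried out at the single parameter $\alpha_0$, and Lemma \ref{lem3.9} is applied there. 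You instead work directly with $x_n$ and pass to a sub-subsequence along which $\alpha_n\to\bar\alpha\in[\varepsilon,\infty]$, which forces the case split $\bar\alpha<\infty$ versus $\bar\alpha=\infty$: the finite case reproduces the paper's limit argument with $\bar\alpha$ in place of $\alpha_0$ (your step $\liminf_n\alpha_n\mathcal{R}(x_n)\ge\bar\alpha\,\mathcal{R}(\bar x)$ is sound, since $\alpha_n\to\bar\alpha>0$ and $\mathcal{R}\ge 0$, so the obstacle you flagged is not actually a problem), while the infinite case is disposed of by the comparison with $x=0$, which gives $\mathcal{R}(x_n)\to 0$ and hence $\bar x=0$ by weak lower semicontinuity and Condition \ref{con2.2}\,(i). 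What each approach buys: the paper's fixed-$\alpha_0$ device handles a possibly unbounded parameter sequence uniformly, with no case analysis, but it leans on Lemma \ref{lem3.1} --- whose monotonicity statement requires some care when minimizers are non-unique --- and leaves the weak compactness of the auxiliary sequence implicit; your argument avoids Lemma \ref{lem3.1} and the auxiliary minimizers altogether, extracting weak compactness explicitly from the comparison with $0$ plus coercivity, at the modest price of the two-case split. Both proofs conclude identically: $F(0)=F(\bar x)=y$ yields $\|F(0)-y^{\delta_n}\|_Y=\|y-y^{\delta_n}\|_Y\le\tau_1\delta_n<\tau_2\delta_n$, contradicting Condition \ref{con2.1}\,(iv), where --- exactly as in the paper --- one implicitly assumes \eqref{equ2.3} holds for each pair $(\delta_n,y^{\delta_n})$ with $\delta_n>0$.
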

\begin{proof}
We argue by contradiction.  Suppose $\alpha_n$ does not converge to 0, i.e.,
$\exists\,\alpha_0>0$, $\forall\,N\in \mathbb{N}^+$, $\exists\,n_0>N$ such that $|\alpha_{n_0}-0|\ge \alpha_0$.
This implies that there exists a subsequence of $\{\alpha_n\}$, still denoted by $\{\alpha_n\}$ such that $\alpha_n\ge\alpha_0$.
Denote
    \[x^{\alpha_0}_n\in \mathop{\arg\min}_{x\in X}\left\{\frac{1}{2}\|F(x)-y^{\delta_n}\|_Y^2+\alpha_0\mathcal{R}(x) \right\}\]
and
    \[x_n:=x_{\alpha_n}^{\delta_n}\in \mathop{\arg\min}_{x\in X}\left\{\frac{1}{2}\|F(x)-y^{\delta_n}\|_Y^2+{\alpha_n}\mathcal{R}(x) \right\}.\]
By Lemma \ref{lem3.1}, $\|F(x_{\alpha}^{\delta})-y^{\delta}\|_Y^2$ is non-decreasing with respect to $\alpha$. Hence,
\begin{align}\label{equ3.9}
    \frac{1}{2}\|F(x^{\alpha_0}_n)-y^{\delta_n}\|_Y^2&\le\frac{1}{2}\|F(x_n)-y^{\delta_n}\|_Y^2\nonumber\\
    & \le \frac{1}{2}\max\{{\tau_2}^2\delta_n^2,(3+2\gamma)^2{\tau_1}^2\delta_n^{2}\}\rightarrow 0.
\end{align}
By the definition of $x^{\alpha_0}_{n}$, there exist $x^*\in X$ and a subsequence $\{x^{\alpha_0}_{n_k}\}$ such that $x^{\alpha_0}_{n_k}\rightharpoonup x^*$ in $X$. Since $F$ is weakly sequently closed and $y^{{\delta}_n}\rightarrow y$,
$F(x^{\alpha_0}_{n_k})-y^{\delta_{n_k}}\rightharpoonup F(x^*)-y$, it follows from the weak lower semi-continuity of the norm that
\begin{align}\label{equ3.10}
0&\le \frac{1}{2}\|F(x^*)-y\|_Y^2\le \liminf\limits_{k\rightarrow\infty}\frac{1}{2}\|F(x^{\alpha_0}_{n_k})-y^{\delta_{n_k}}\|_Y^2\nonumber\\
 & \le \frac{1}{2}\max\{\tau_2^2\delta_{n_k}^2,(3+2\gamma)^2\tau_1^2\delta_{n_k}^{2}\}\rightarrow 0.
\end{align}
Hence, $F(x^*)=y$. In addition,
\begin{align}\label{equ3.11}
\frac{1}{2}\|F(x^*)-y\|_Y^2+\alpha_0\mathcal{R}(x^*)&\le \liminf\limits_{k\rightarrow \infty} \bigg\{\frac{1}{2}\|F(x^{\alpha_0}_{n_k})-y^{\delta_{n_k}}\|_Y^2+\alpha_0\mathcal{R}(x^{\alpha_0}_{n_k})\bigg\}\nonumber\\
& \le \liminf\limits_{k\rightarrow\infty}\bigg\{\frac{1}{2}\|F(x)-y^{\delta_{n_k}}\|_Y^2+\alpha_0\mathcal{R}(x)\bigg\}\nonumber\\
& = \frac{1}{2}\|F(x)-y\|_Y^2+\alpha_0\mathcal{R}(x)
\end{align}
for any $x\in X$. Thus, $x^*$ is a minimizer of $\frac{1}{2}\|F(x)-y\|_Y^2+\alpha_0\mathcal{R}(x)$. By Lemma \ref{lem3.9}, this implies that $x^*=0$. Then
$y=F(0)$. Hence, $\|F(0)-y^{\delta}\|_Y=\|y-y^{\delta}\|_Y\le \delta$, contradicting to $\|F(0)-y^{\delta}\|>\delta$ in Condition \ref{con2.1}\,(iv). This proves the theorem.
\end{proof}

\par For a numerical realization of MDP \eqref{equ3.7}, we can make use of an iterative algorithm described in \cite{R02}, cf.\ Algorithm \ref{alg1}. Even though $\alpha$ is determined from the upper bound: $\|F(x_{\alpha}^{\delta})-y^{\delta}\|_Y=c\delta$, we can still obtain the convergence of the regularized solution. Numerical simulation results reported in Section \ref{sec3} show that we can obtain better inversion results if $\alpha$ is determined by Algorithm \ref{alg1}.

\begin{algorithm}
\caption{Iterative algorithm for $\alpha$ under MDP \eqref{equ3.7}}
\label{alg1}
\begin{algorithmic}
\STATE{Choose $\tau>0$, $\eta=1$, $0<q<1$, $j=0$, $\alpha_0>0$ with $\|F(x_{\alpha_0}^{\delta})-y^{\delta}\|_Y>c\delta$ }
\STATE{for $j=1,2,\cdots$,}
\STATE{if $\|F(x_{\alpha_{j-1}}^{\delta})-y^{\delta}\|_Y>c\delta$}
\STATE{~~~~$\alpha_j=q\alpha_{j-1}$}
\STATE{~~~~compute $x_{\alpha_j}^{\delta}$}
\STATE{~~~~$\alpha_{j}^{\max}=\alpha_{j-1}$, $\alpha_{j}^{\min}=\alpha_{j}$}
\STATE{else}
\STATE{~~~~$\alpha_j=\left(\alpha_{j-1}^{\min}+\alpha_{j-1}^{\max}\right)/2$}
\STATE{~~~~compute $x_{\alpha_j}^{\delta}$} by iterative soft thresholding algorithm
\STATE{~~~~if $\|F(x_{\alpha_j}^{\delta})-y^{\delta}\|_Y> c\delta$ then $\alpha_{j}^{\max}=\alpha_j$}
\STATE{~~~~if $\|F(x_{\alpha_j}^{\delta})-y^{\delta}\|_Y< \tau_1\delta$ then $\alpha_{j}^{\min}=\alpha_j$}
\STATE{end}
\end{algorithmic}
\end{algorithm}

\par We note that the upper bound $c\delta$ from Theorem \ref{the3.8} is $O(\delta)$. Then the convergence of the regularized solution is standard (\cite{EHN1996}). Indeed, we can show the regularized solution $x_{\alpha}^{\delta}$ defined by (\ref{equ2.1}) converges to an $\mathcal R$-minimum solution
of problem $F(x)=y$.

\section{Regularization properties}\label{sec4}

\par In this section, we examine the well-posedness of the regularization method under MDP (\ref{equ3.7}). We prove that the regularized solution $x_{\alpha}^{\delta}$ defined by (\ref{equ2.1}) converges to an $\mathcal R$-minimum solution of the problem $F(x)=y$. In addition, we discuss the convergence rate of $x_{\alpha}^{\delta}$. The proofs are along the lines of standard quadratic Tikhonov regularization (\cite{EHN1996}) and sparsity regularization (\cite{GHS08,JLS09,SGGHL2009,WLMC13}). However, analysis of the convergence rate is different from that in \cite{DH19} since the regularization parameter $\alpha$ is now determined by \eqref{equ3.7}.

\begin{theorem}\label{the4.1} 
{\rm (Convergence)} Let $x_{\alpha_{n}}^{\delta_{n}}$ be a minimizer of $\mathcal{J}_{\alpha_n}^{\delta_n}(x)$ defined by \eqref{equ1.2} with the data $y^{\delta_n}$ satisfying $\| y-y^{\delta_n}\|\leq\delta_n$, where $\delta_n\rightarrow 0$ as $n\rightarrow \infty$ and $y^{\delta_n}$ belongs to the range of $F$. Let $\alpha_n$ be chosen by the Morozov's discrepancy principle \eqref{equ3.7}. Then there exists a subsequence of $\{x_{\alpha_{n}}^{\delta_{n}}\}$, still denoted by $\{x_{\alpha_{n}}^{\delta_{n}}\}$, that converges to an $\mathcal{R}$-minimizing solution $x^{\dag}$ in $X$. In addition, if the $\mathcal{R}$-minimizing solution $x^{\dag}$ is unique, then
    \[\lim\limits_{n \rightarrow \infty} \| x_{\alpha_{n}}^{\delta_{n}}-x^{\dag}\|_{X}=0.\]
\end{theorem}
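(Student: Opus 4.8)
The plan is to follow the classical template for convergence of Tikhonov regularization with a convex penalty, driving everything with the two-sided discrepancy bounds of MDP \eqref{equ3.7}. For brevity write $x_n:=x_{\alpha_n}^{\delta_n}$, and let $x^{\dag}$ be an $\mathcal{R}$-minimizing solution of $F(x)=y$, whose existence is guaranteed under the standing assumptions once $y$ lies in the range of $F$ (the solution set is nonempty and, by weak sequential closedness of $F$, weakly closed, so the coercive and weakly lower semi-continuous $\mathcal{R}$ attains its infimum over it). I would not need $\alpha_n\to 0$ from Theorem \ref{the3.10}; only the discrepancy inequalities are used.

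The first step is to extract a uniform penalty bound. Since $F(x^{\dag})=y$ gives $\|F(x^{\dag})-y^{\delta_n}\|_Y=\|y-y^{\delta_n}\|_Y\le\delta_n$, minimality of $x_n$, i.e.\ $\mathcal{J}_{\alpha_n}^{\delta_n}(x_n)\le\mathcal{J}_{\alpha_n}^{\delta_n}(x^{\dag})$, yields
\[ \frac12\|F(x_n)-y^{\delta_n}\|_Y^2+\alpha_n\mathcal{R}(x_n)\le\frac12\delta_n^2+\alpha_n\mathcal{R}(x^{\dag}). \]
Now the lower bound $\|F(x_n)-y^{\delta_n}\|_Y\ge\tau_1\delta_n$ from \eqref{equ3.7} together with $\tau_1\ge 1$ gives $\frac12\|F(x_n)-y^{\delta_n}\|_Y^2\ge\frac12\delta_n^2$, which absorbs the $\frac12\delta_n^2$ on the right and leaves $\mathcal{R}(x_n)\le\mathcal{R}(x^{\dag})$ for every $n$. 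This is the one place where the precise MDP condition, and in particular $\tau_1\ge1$, is essential; the rest is routine.

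Next I would invoke compactness and the structural properties of $F$ and $\mathcal{R}$. Coercivity (Condition \ref{con2.2}(ii)) makes $\{x_n\}$ bounded, so by reflexivity of $X$ a subsequence satisfies $x_{n_k}\rightharpoonup\bar{x}$. The upper bound $\|F(x_n)-y^{\delta_n}\|_Y\le c\delta_n\to0$ together with $y^{\delta_n}\to y$ forces $F(x_{n_k})\to y$ strongly, hence $F(x_{n_k})\rightharpoonup y$, and weak sequential closedness (Condition \ref{con2.1}(ii)) gives $F(\bar{x})=y$, so $\bar{x}$ is a solution. Weak lower semi-continuity of $\mathcal{R}$ and the previous bound then give
\[ \mathcal{R}(\bar{x})\le\liminf_{k}\mathcal{R}(x_{n_k})\le\limsup_{k}\mathcal{R}(x_{n_k})\le\mathcal{R}(x^{\dag}). \]
Since $x^{\dag}$ is $\mathcal{R}$-minimizing and $\bar{x}$ is a solution, $\mathcal{R}(\bar{x})=\mathcal{R}(x^{\dag})$, so $\bar{x}$ is itself $\mathcal{R}$-minimizing; the chain of inequalities also forces $\mathcal{R}(x_{n_k})\to\mathcal{R}(\bar{x})$, whence the Radon--Riesz property (Condition \ref{con2.2}(iv)) upgrades $x_{n_k}\rightharpoonup\bar{x}$ to strong convergence $x_{n_k}\to\bar{x}$ in $X$, establishing the first assertion.

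For the final claim, uniqueness of $x^{\dag}$ permits a standard subsequence-of-subsequence argument: the reasoning above applies verbatim to any subsequence of $\{x_n\}$ and produces a further subsequence converging strongly to an $\mathcal{R}$-minimizing solution, which by uniqueness must be $x^{\dag}$. Since every subsequence thus has a further subsequence with the common limit $x^{\dag}$, the whole sequence converges, giving $\|x_n-x^{\dag}\|_X\to0$. I expect the only genuinely delicate point to be the penalty estimate $\mathcal{R}(x_n)\le\mathcal{R}(x^{\dag})$, where the lower discrepancy bound and the requirement $\tau_1\ge1$ must be used in tandem; everything else is a direct application of coercivity, weak closedness, weak lower semi-continuity, and the Radon--Riesz property.
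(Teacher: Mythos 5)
Your proposal is correct and follows essentially the same route as the paper: minimality plus the lower discrepancy bound $\tau_1\delta_n\le\|F(x_n)-y^{\delta_n}\|_Y$ with $\tau_1\ge1$ yields $\mathcal{R}(x_n)\le\mathcal{R}(x^{\dag})$, the upper bound $c\delta_n$ forces $F(x_n)\to y$, and coercivity, weak sequential closedness, weak lower semi-continuity, and the Radon--Riesz property complete the argument exactly as in the paper's proof. If anything, you are slightly more careful than the paper at two points: you make the subsequence-of-subsequences argument for full-sequence convergence explicit (the paper leaves it implicit), and you correctly attribute the final strong-convergence step to the Radon--Riesz property, Condition \ref{con2.2}\,(iv), where the paper's citation of Condition \ref{con2.2}\,(iii) is a typo.
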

\begin{proof}
Denote $y_{n}:=y^{\delta_{n}}$, $x_{n}:=x_{\alpha_{n}}^{\delta_{n}}$. By the definition of $x_{n}$, we obtain
\begin{align}\label{equ4.1}
    \frac{1}{2}\| F(x_{n})-y_{n}\|_Y^{2} +\alpha_{n}\mathcal{R}(x_{n})
    &\leq\frac{1}{2}\|F(x^{\dag})-y_n\|_Y^2+\alpha_{n}\mathcal{R}(x^{\dag})\nonumber\\
    &\leq\frac{1}{2}\delta_n^2+\alpha_{n}\mathcal{R}(x^{\dag}).
\end{align}
By \eqref{equ3.7}, this implies that $\mathcal{R}(x_n)\le \mathcal{R}(x^{\dag})$. Hence, the sequence $\{\mathcal{R}(x_n)\}$ is bounded. Denote
    \[c:=\max\left\{ \tau_2,(3+2\gamma)\tau_1 \right\}.\]
We have $c\delta_n\rightarrow 0$ as $\delta_n\rightarrow 0$. In addition,
\begin{equation}\label{equ4.2}
    \| F(x_{n})-y\|_Y\leq\| F(x_{n})-y_{n}\|_Y+\| y-y_{n}\|_Y\leq c\delta_n+\delta_{n}.
\end{equation}
Then
\begin{equation}\label{equ4.3}
\begin{array}{llc}
    \lim\limits_{n \to \infty}F(x_n)=y.
\end{array}
\end{equation}
On the other hand, it follows from (\ref{equ4.1}) that
\begin{equation}\label{equ4.4}
    \limsup\limits_{n \to \infty}\mathcal{R}(x_n)
    \leq\mathcal{R}(x^{\dag}).
\end{equation}
Since $ \mathcal{R}(x_n)$ is bounded, there exist an element $x^{*}\in X$
and a subsequence of $\{x_n\}$, still denoted by $\{x_n\}$, such that $x_{n}\rightharpoonup x^{*}$ in $X$.
Together with (\ref{equ4.3}), it follows that
    \[\|F(x^*)-y\|_Y\le \liminf\limits_{n \to \infty}\|F(x_n)-y\|_Y=0.\]
Hence, $F(x^*)=y$. Meanwhile, by Condition \ref{con2.2} (ii), we have
\begin{align}\label{equ4.5}
 \mathcal{R}(x^*)\leq\liminf\limits_{n}\mathcal{R}(x_n)\leq\mathcal{R}(x^{\dag}).
 \end{align}
By the definition of $x^{\dag}$, $x^{*}$ is an $\mathcal{R}$-minimizing solution. If the $\mathcal{R}$-minimizing solution is unique, then $x^{*}=x^{\dag}$. A combination of (\ref{equ4.4}) and (\ref{equ4.5}) implies $\mathcal{R}(x_n)\rightarrow\mathcal{R}(x^{\dag})$. Thus, $\mathcal{R}(x_n)\rightarrow \mathcal{R}(x^{\dag})$. Then $\lim\limits_{n \rightarrow \infty} \| x_{n}-x^{\dag} \|_{X}=0$ by Condition \ref{con2.2} (iii).
\end{proof}

\par Next, we present a result on the linear convergence rate $O(\delta)$ of the regularized solution $x_{\alpha}^{\delta}$ under the source condition and Bregman distance.  For this purpose, we introduce an assumption on an $\mathcal R$-minimizing solution $x^{\dagger}$ of the problem $F(x)=y$.

\begin{assumption}\label{ass4.2}
There exists an $\omega\in Y$ such that
\begin{equation}\label{equ4.6}
    F'(x^{\dag})^{*}\omega\in \partial\mathcal{R}(x^{\dag}).
\end{equation}
\end{assumption}

\begin{theorem}\label{the4.3} 
{\rm (Convergence rate)} Let $ x^{\delta}_{\alpha}$ be defined by \eqref{equ2.1}. Assume that there exist parameters $\alpha$ satisfying MDP \eqref{equ3.7}. Then, under Assumptions \ref{ass4.2}, there exists $d\in D_{\mathcal{R}}^\xi(x_{\alpha}^{\delta}, x^{\dag})$ such that $d=O(\delta)$.
\end{theorem}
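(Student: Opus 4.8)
The plan is to run the classical Bregman-distance argument for nonlinear Tikhonov regularization (as in \cite{EHN1996,SGGHL2009}), but to exploit the lower bound $\tau_1\ge 1$ of the discrepancy principle to cancel the term that would otherwise force a lower bound on $\alpha$. Set $\xi := F'(x^{\dag})^{*}\omega$, which lies in $\partial\mathcal{R}(x^{\dag})$ by Assumption \ref{ass4.2}, so that the quantity to be bounded is
\[
D_{\mathcal{R}}^{\xi}(x_{\alpha}^{\delta}, x^{\dag}) = \mathcal{R}(x_{\alpha}^{\delta}) - \mathcal{R}(x^{\dag}) - \langle \xi, x_{\alpha}^{\delta}-x^{\dag}\rangle .
\]

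First I would use the minimizing property of $x_{\alpha}^{\delta}$ against the competitor $x^{\dag}$. Since $F(x^{\dag})=y$ and $\|y-y^{\delta}\|_{Y}\le\delta$, the comparison $\mathcal{J}_{\alpha}^{\delta}(x_{\alpha}^{\delta})\le\mathcal{J}_{\alpha}^{\delta}(x^{\dag})$ gives
\[
\tfrac{1}{2}\|F(x_{\alpha}^{\delta})-y^{\delta}\|_{Y}^{2} + \alpha\mathcal{R}(x_{\alpha}^{\delta}) \le \tfrac{1}{2}\delta^{2} + \alpha\mathcal{R}(x^{\dag}),
\]
hence $\alpha[\mathcal{R}(x_{\alpha}^{\delta})-\mathcal{R}(x^{\dag})]\le \tfrac{1}{2}\delta^{2} - \tfrac{1}{2}\|F(x_{\alpha}^{\delta})-y^{\delta}\|_{Y}^{2}$. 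This is where the discrepancy principle enters decisively: by the lower bound in \eqref{equ3.7} and $\tau_1\ge 1$ we have $\|F(x_{\alpha}^{\delta})-y^{\delta}\|_{Y}^{2}\ge\tau_1^{2}\delta^{2}\ge\delta^{2}$, so the right-hand side is nonpositive and therefore $\alpha[\mathcal{R}(x_{\alpha}^{\delta})-\mathcal{R}(x^{\dag})]\le 0$.

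Combining this with the definition of the Bregman distance and $\xi = F'(x^{\dag})^{*}\omega$, I would obtain
\[
\alpha\, D_{\mathcal{R}}^{\xi}(x_{\alpha}^{\delta}, x^{\dag}) \le -\alpha\langle \xi, x_{\alpha}^{\delta}-x^{\dag}\rangle = -\alpha\langle \omega, F'(x^{\dag})(x_{\alpha}^{\delta}-x^{\dag})\rangle \le \alpha\|\omega\|_{Y}\,\|F'(x^{\dag})(x_{\alpha}^{\delta}-x^{\dag})\|_{Y}.
\]
To control the linearized residual I would invoke the consequence of the tangential cone condition recorded just after \eqref{equ2.2}, namely $\|F'(x^{\dag})(x_{\alpha}^{\delta}-x^{\dag})\|_{Y}\le(1+\gamma)\|F(x_{\alpha}^{\delta})-F(x^{\dag})\|_{Y}$, together with the Morozov upper bound $\|F(x_{\alpha}^{\delta})-y^{\delta}\|_{Y}\le c\delta$ and the triangle inequality $\|F(x_{\alpha}^{\delta})-y\|_{Y}\le\|F(x_{\alpha}^{\delta})-y^{\delta}\|_{Y}+\|y^{\delta}-y\|_{Y}\le(c+1)\delta$. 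This yields $\|F'(x^{\dag})(x_{\alpha}^{\delta}-x^{\dag})\|_{Y}\le(1+\gamma)(c+1)\delta$. Dividing through by $\alpha>0$ then gives $D_{\mathcal{R}}^{\xi}(x_{\alpha}^{\delta}, x^{\dag})\le(1+\gamma)(c+1)\|\omega\|_{Y}\,\delta = O(\delta)$, which is the assertion, the nonnegativity of the Bregman distance making this a genuine two-sided control.

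The hard part, and the point deserving care, will be the applicability of \eqref{equ2.2} to the pair $x^{\dag}$ and $x_{\alpha}^{\delta}$: as stated, the cone condition only covers $x_1,x_2\in\mathcal{L}_{\alpha}^{\delta}$, whereas here one endpoint is the $\mathcal{R}$-minimizing solution $x^{\dag}$. I would address this either by assuming, as is standard for rate results, that the cone condition holds on a ball containing both $x^{\dag}$ and the regularized solutions, or by arguing that $x^{\dag}$ may be treated alongside $\mathcal{L}_{\alpha}^{\delta}$ in the relevant regime. Apart from this, the argument is entirely elementary; the one genuinely delicate idea is the cancellation enabled by $\tau_1\ge 1$, which removes the usual $\delta^{2}/\alpha$ term and thereby secures the $O(\delta)$ rate \emph{without} requiring any lower bound on $\alpha$.
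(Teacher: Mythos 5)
Your argument is essentially identical to the paper's proof: the same comparison $\mathcal{J}_{\alpha}^{\delta}(x_{\alpha}^{\delta})\le\mathcal{J}_{\alpha}^{\delta}(x^{\dag})$ combined with the lower bound $\tau_1\ge 1$ of \eqref{equ3.7} to get $\mathcal{R}(x_{\alpha}^{\delta})\le\mathcal{R}(x^{\dag})$, then the Bregman-distance estimate via the source condition \eqref{equ4.6}, the cone-condition consequence $\|F'(x^{\dag})(x_{\alpha}^{\delta}-x^{\dag})\|_{Y}\le(1+\gamma)\|F(x_{\alpha}^{\delta})-F(x^{\dag})\|_{Y}$, and the Morozov upper bound $c\delta$ to conclude $d\le\|\omega\|(1+\gamma)(c+1)\delta$. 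Your closing caveat about applying \eqref{equ2.2} with one endpoint equal to $x^{\dag}$ rather than a pair in $\mathcal{L}_{\alpha}^{\delta}$ is well taken --- the paper's own proof silently makes the same extension --- so your version is, if anything, slightly more careful on that point.
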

\begin{proof} 
Due to MDP (\ref{equ3.7}) and the minimization property of $\displaystyle x^{\delta}_{\alpha}$,
\begin{align*}
\frac{1}{2}\delta^{2}+\alpha \mathcal{R}(x_{\alpha}^{\delta})
 & \leq\frac{1}{2}\|F(x_{\alpha}^{\delta})-y^{\delta}\|_{Y}^{2}+\alpha \mathcal{R}(x_{\alpha}^{\delta})\\
    & \leq\frac{1}{2}\|F(x^{\dagger})-y^{\delta}\|_{Y}^{2}+\alpha \mathcal{R}(x^{\dagger})\\
    & \leq\frac{1}{2}\delta^{2}+\alpha\mathcal{R}(x^{\dagger}).
\end{align*}
Then $\mathcal{R}(x_{\alpha}^{\delta})\leq \mathcal{R}(x^{\dagger})$. Consider the Bregman distance 
\begin{equation*}
    d= \mathcal{R}(x_{\alpha}^{\delta})-\mathcal{R}(x^{\dagger})-\langle F'(x^{\dagger})^{*}\omega, x_{\alpha}^{\delta}-x^{\dagger}\rangle.
\end{equation*}
Since $\mathcal{R}(x_{\alpha}^{\delta})\leq \mathcal{R}(x^{\dagger})$,
\begin{equation}\label{equ4.7}
    d\leq -\langle F^{'}(x^{\dagger})^{*}\omega,x_{\alpha}^{\delta}-x^{\dagger}\rangle\leq \|\omega\|\|F^{'}(x^{\dagger})(x_{\alpha}^{\delta}-x^{\dagger})\|_{Y}.
\end{equation}
A combination of \eqref{equ4.7} and the nonlinear condition \eqref{equ2.2} implies that
\begin{align*}
    d
    & \leq \|\omega\|(1+\gamma)\|F(x_{\alpha}^{\delta})-F(x^{\dagger})\|_{Y}\nonumber\\
    & \leq \|\omega\|(1+\gamma)(\|F(x_{\alpha}^{\delta})-y^{\delta}\|_{Y}+\|y-y^{\delta}\|_{Y}).
\end{align*}
By MDP \eqref{equ3.7}, $\|F(x_{\alpha}^{\delta})-y^{\delta}\|_{Y}\le c\delta$. Thus
    \[d \leq \|\omega\|(1+\gamma)(c+1)\delta,\]
i.e., $d=O(\delta)$.
\end{proof}

\section{Numerical experiments}\label{sec5}

\par In this section, we present two numerical examples to demonstrate the efficiency of MDP \eqref{equ3.7}. The relative error (Rerror) is utilized to measure the accuracy of the reconstruction $x^{*}$:
    \[  \mathrm{Rerror}:=\frac{\|x^{*}-x^{\dag}\|}{\|x^{\dag}\|}, \]
where $x^{\dag}$ is a true solution. The exact data $y^{\dag}$ is generated by $y^{\dag}=F(x^{\dag})$. The noise level $\delta$ is defined by $\delta:=\|F(x^{\dag})-y^{\delta}\|_2$. All numerical calculations were done using MATLAB R2010 on an i7-6500U 2.50GHz workstation with 8Gb RAM.

\subsection{A nonlinear compressive sensing problem}

First, we solve a nonlinear compressive sensing (CS) problem (\cite{BE13,B13,CL16,YWLJWJ15}) to demonstrate the efficiency of MDP for the $\ell_{1}$-regularization. The research of nonlinear CS is important not only in theoretical analysis but also in many applications where the observation system is often nonlinear. For example, in diffraction imaging, a charge coupled device (CCD) records the amplitude of the Fourier transform of the original signal. So one only obtains the nonlinear measurements of the original signal. Fortunately, as shown in \cite{B13}, if the system satisfies some nonlinear conditions then recovery is still possible. Under the nonlinear CS frame, the measurement system is nonlinear. Assume, therefore, that the observation model is
\begin{equation}\label{equ5.1}
    y=F(x)+\delta,
\end{equation}
where $F: \mathbb{R}^n\rightarrow \mathbb{R}^m$ is a nonlinear operator, $x\in \mathbb{R}^n$ and $\delta\in \mathbb{R}^m$ is a noise. It can be shown that if the linearization of $F$ at an exact solution $x^{\dag}$ satisfies the restricted isometry property (RIP), then the convergence property of the iterative hard thresholding algorithm (IHTA) is guaranteed (\cite{B13}). Next we illustrate the efficiency of the proposed algorithm by a nonlinear CS example of the form (\cite{B13})
\begin{equation}\label{equ5.2}
y=F(x):=\hat{a}(A\hat{b}(x)),
\end{equation}
where $A$ is a CS matrix, $\hat{a}$ and $\hat{b}$ are nonlinear operators, respectively. Here, $\hat{a}$ encodes nonlinearity after mixing by $A$ as well as nonlinear ``crosstalk'' between mixed elements, and $\hat{b}$ encodes the same system properties for the inputs before mixing. For simplicity, we write $\hat{a}(x)= x^c$ and $\hat{b}(x)= x+b(x)$, where $b$ is a nonlinear map. In particular, let $b(x)=x^d$, where $c, d \in \mathbb{N}^{+}$ and $x^c$ and $x^d$ should be understood in a componentwise sense.

\par Next, we consider the nonlinearity of the operator $F$. In \cite{B13}, it is shown that the Jacobian matrix of $\hat{a}(A\hat{b}(x))$ is of the form
    \[F'(x)=[a'_x(\Phi\hat{b}(x))][\Phi(I+b'_x(x))].\]
There exists a constant $\gamma>0$ such that
\[ \|F(x_1)-F(x_2)-F'(x_2)(x_1-x_2)\|_{Y}\leq L\|x_1-x_2\|_{\ell_2},\]
cf.\ \cite[Lemmas 3 \& 4]{B13}. For example, we let $c=1$, $d=3$, then $F(x)=A(x+x^3)$. Assume $A$ is bounded.  Then obviously,
\[ \|x_1-x_2\|_{\ell_2}\le K\|F(x_1)-F(x_2)\|_Y.\]
Consequently, $F$ satisfies the nonlinear condition \eqref{con2.2}.

\par In the experiment, an ST-($\alpha\ell_{1}-\beta\ell_{2}$) ($\beta=0$) algorithm for nonlinear ill-posed problems is used to compute the iterative solution, cf.\ \cite{DH24} for details of the algorithm.
The ST-($\alpha\ell_{1}-\beta\ell_{2}$) algorithm reduces to the form
\[ x^{k+1}=\mathbb{S}_{\alpha/\lambda} \left(x^k-\frac{1}{\lambda}F'(x^k)^*(F(x^k)-y^{\delta})\right). \]

\par Let $n = 200$, $m = 0.4n$, $p = 0.2m$, $\tau_1 = 1$ and $\tau_2 = 2$, where $p$ is the number of the impulses in the true solution. White Gaussian noise is added to the exact data $y^{\dag}$ by calling $\mathrm{y^{\delta}=awgn( F(x^{\dag}), \sigma})$ in MATLAB, where $\sigma$ is the added noise, measured in dB, which measures the ratio between the true (noise free) data $y^{\dag}$ or $F(x^{\dag})$ and Gaussian noise. Meanwhile, we let $\gamma=1/2$, $c=1$ and $d=3$. For the sparsity regularization of linear ill-posed problems,  the value of $\|A_{m\times n}\|_{\ell_{2}}$ needs to be less than 1 (\cite{DDD04}). This condition is also needed for the nonlinear CS problem \eqref{equ5.1}. Since the value of $\|A_{m\times n}\|_{\ell_{2}}$ is around 20, we re-scale the matrix $A_{m\times n}$ by $A_{m\times n}\rightarrow0.05A_{m\times n}$. The initial value $x^0=\mathrm{zeros}(n,1)$.

\begin{figure}[tbhp]
\centering
\includegraphics[width=180mm,height=30mm]{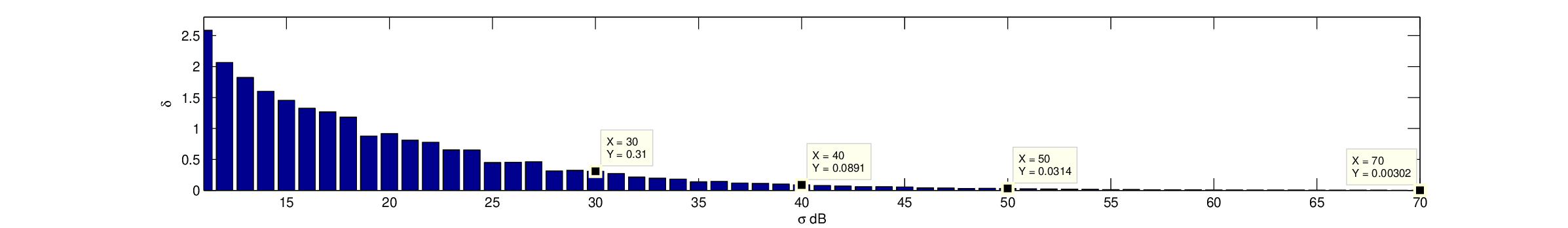}
\caption{The noise level $\delta$ vs.\ $\sigma$.}
\label{fig1}
\end{figure}

\begin{figure}[tbhp]
\centering
\includegraphics[width=180mm,height=30mm]{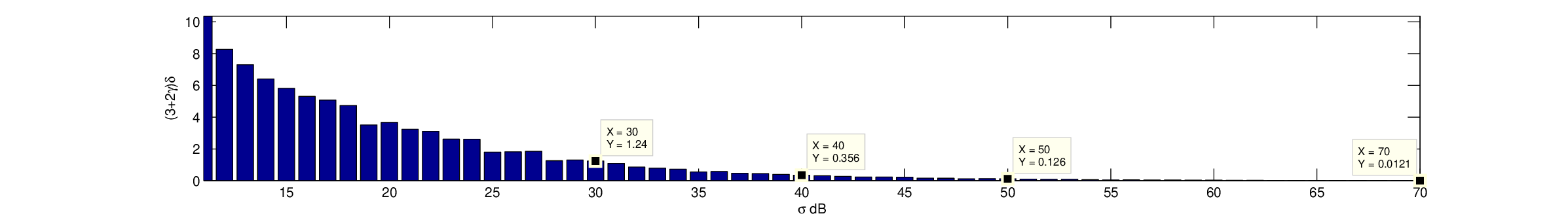}
\caption{The upper bound $c\delta$ vs.\ $\sigma$. }
\label{fig2}
\end{figure}

\par In Fig.\ \ref{fig1} and Fig.\ \ref{fig2}, we show relations among the noise level $\delta$, $c\delta$ and $\sigma$. Since the Gaussian noise is random, strict monotonicity is not maintained numerically for the noise level and the upper bound in \eqref{equ1.5} with respect to $\sigma$. Nevertheless, overall, the approximate monotonicity property is observed.

\begin{figure}[tbhp]
\centering
\subfigure[]{\includegraphics[width=80mm,height=60mm]{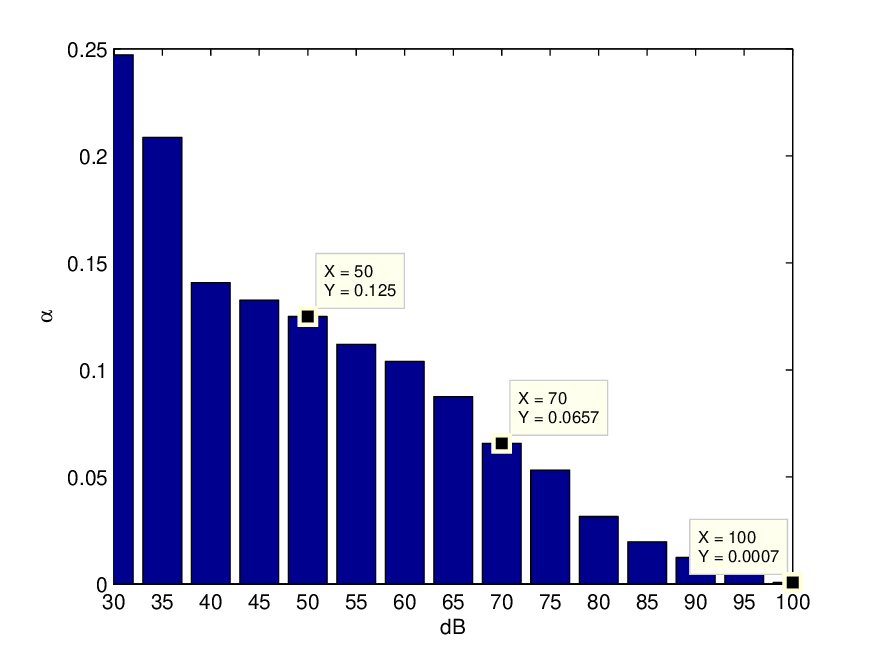}}
\subfigure[]{\includegraphics[width=80mm,height=60mm]{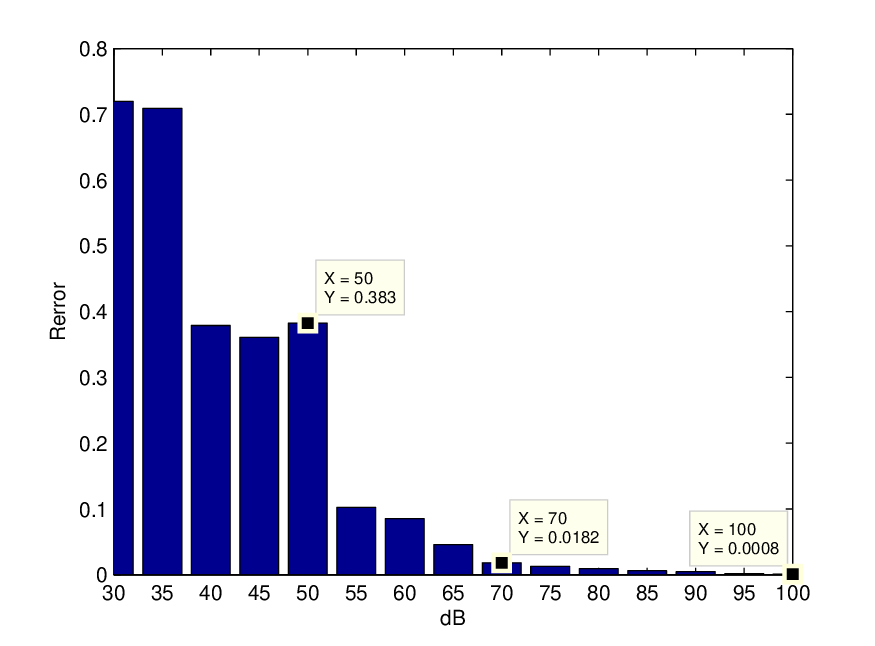}}
\caption{(a) $\alpha$ vs.\ $\sigma$, $\alpha$ being determined by $\|F(x_{\alpha, \beta}^{\delta})-y^{\delta}\|_Y=c\delta$; (b) Rerror vs.\ $\sigma$.}
\label{fig3}
\end{figure}

\begin{figure}[tbhp]
\centering
\subfigure[]{\includegraphics[width=80mm,height=60mm]{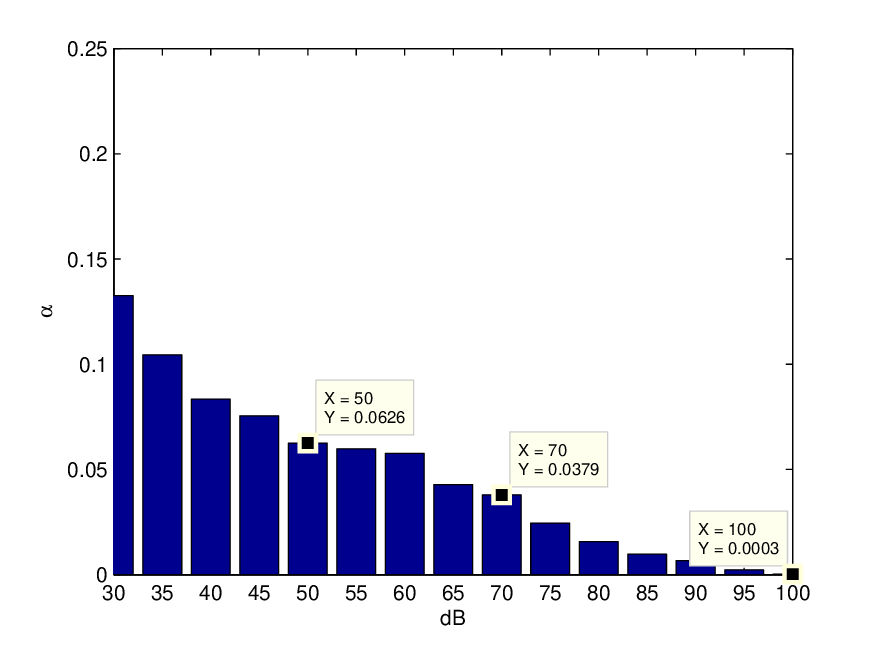}}
\subfigure[]{\includegraphics[width=80mm,height=60mm]{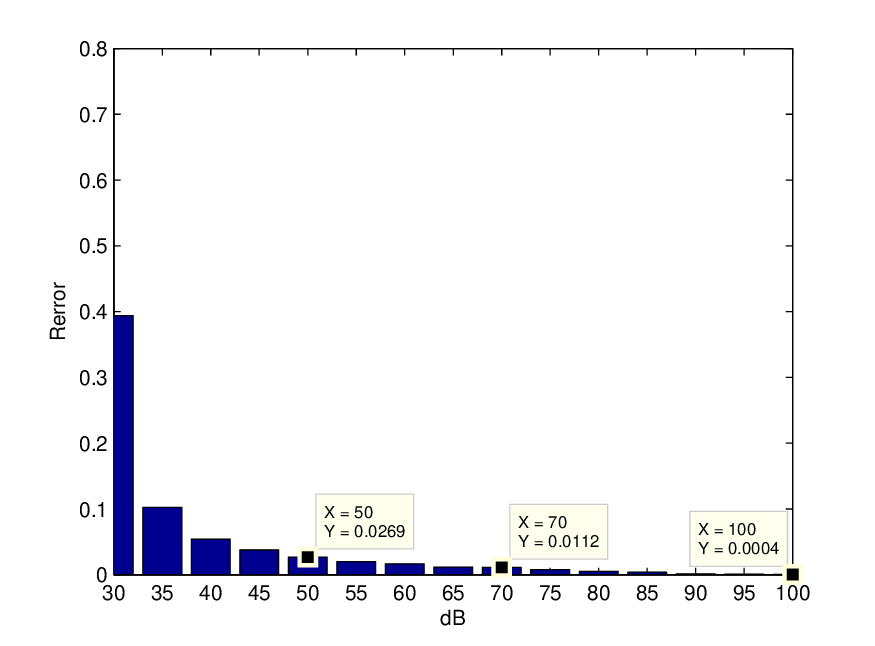}}
\caption{(a) $\alpha$ vs.\ $\sigma$, $\alpha$ being determined by Algorithm \ref{alg1}; (b) Rerror vs.\ $\sigma$.}
\label{fig4}
\end{figure}

\par To test sharpness of the upper bound in \eqref{equ1.5}, let $\tau_{1}=1$ and choose $\alpha$ with $\|F(x_{\alpha, \beta}^{\delta})-y^{\delta}\|_Y=c\delta$. The value of $\alpha$ satisfying $\|F(x_{\alpha, \beta}^{\delta})-y^{\delta}\|_Y= c\delta$ can be obtained only approximately. In computational practice, this value of $\alpha$ can be found approximately by a sorting from a given set of values $\alpha_1, \alpha_2, \cdots, \alpha_n$. For example, with a good initial guess $\alpha_0$, we let $\alpha_n=\alpha_0+0.001*n$ (or $\alpha_n=\alpha_0-0.001*n$) to find a satisfactory approximate value of $\alpha$ of a solution of $\|F(x_{\alpha, \beta}^{\delta})-y^{\delta}\|_Y= c\delta$. In Fig.\ \ref{fig3} (a), we observe that $\alpha\rightarrow 0$ as the noise level $\delta\rightarrow 0$. In Fig.\ \ref{fig3} (b), we see that Rerror decreases with respect to $\sigma$. This illustrates the convergence of the regularized solution with respect to the noise level $\delta$. Although the upper bound in \eqref{equ1.5} appears to be loose, if the noise levels are small enough, we can still recover satisfactory results.

\par Next, we test the convergence of the regularized solutions where $\alpha$ is determined by Algorithm \ref{alg1}. In Fig.\ \ref{fig4}, it is shown that $\alpha\rightarrow 0$ as the noise level $\delta\rightarrow 0$ and Rerror converges to 0 as $\delta\rightarrow 0$.

\begin{figure}[tbhp]
\centering
\subfigure[True solution]{\includegraphics[width=80mm,height=60mm]{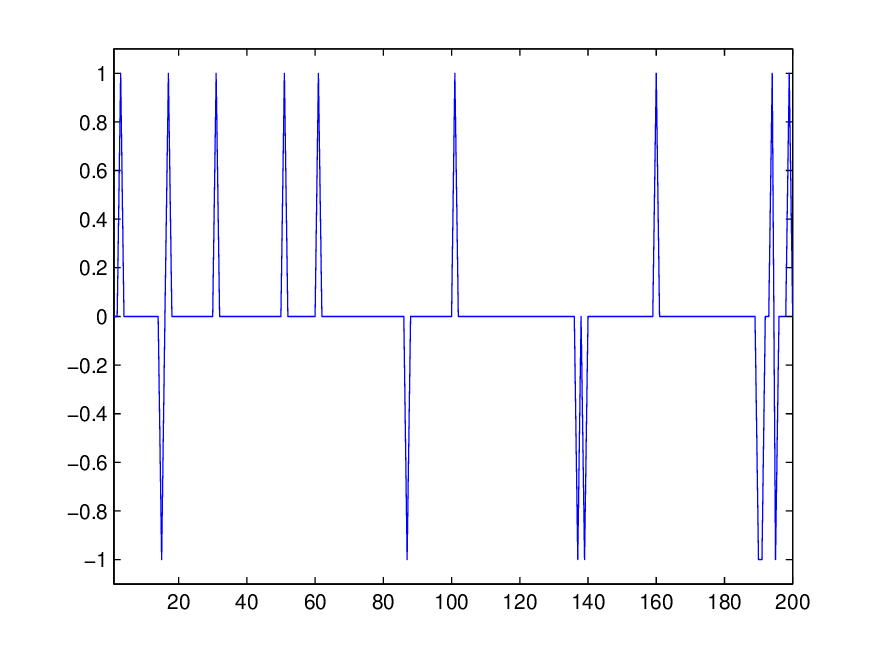}}
\subfigure[Observed data]{\includegraphics[width=80mm,height=60mm]{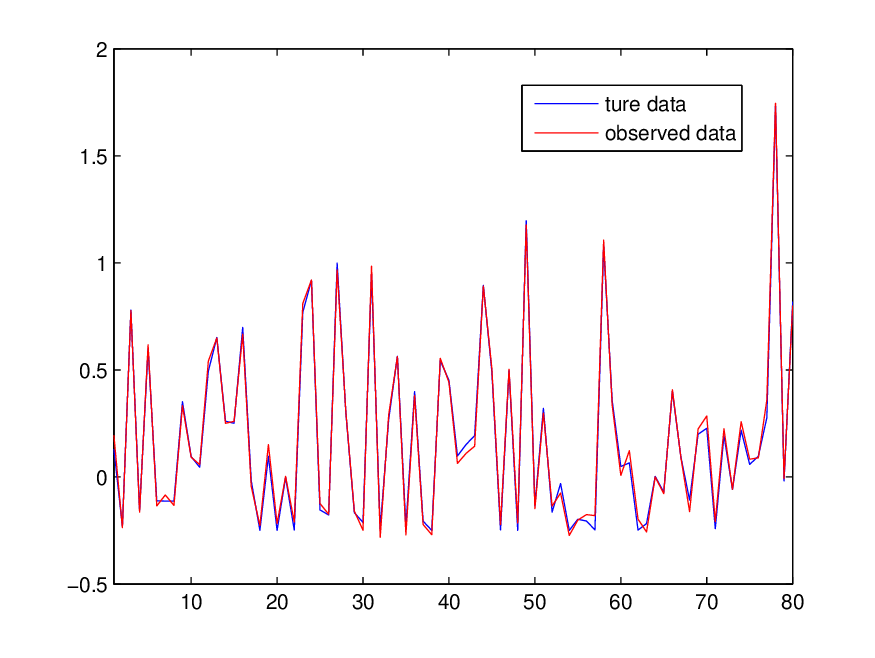}}
\subfigure[Rerror= 0.0445]{\includegraphics[width=80mm,height=60mm]{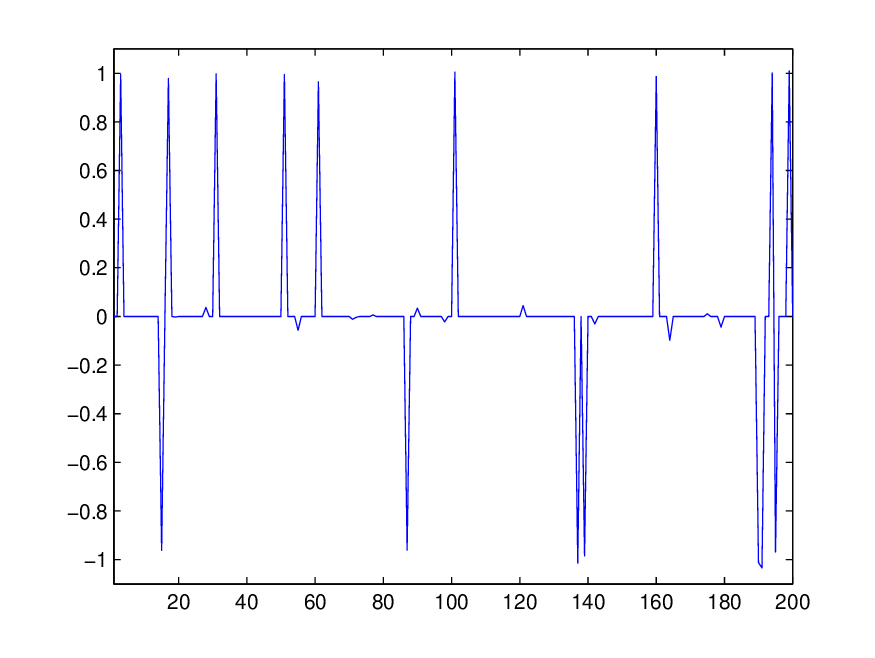}}
\subfigure[Rerror= 0.0217]{\includegraphics[width=80mm,height=60mm]{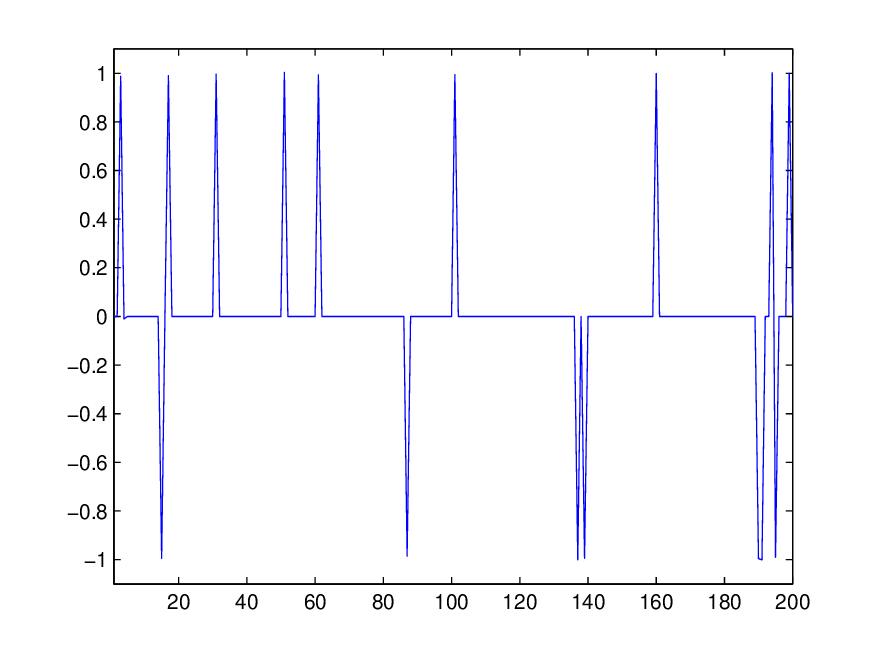}}
\caption{(a) True solution; (b) Observed data; (c) Recovered solution with $\alpha: \|F(x_{\alpha}^{\delta})-y^{\delta}\|_Y= c\delta$; (d) Recovered solution with $\alpha $ is determined by Algorithm \ref{alg1}.}
\label{fig5}
\end{figure}

\par Graphs of the reconstruction $x^{*}$ corresponding to $\sigma=60$dB ($\delta=0.014$ and $c\delta=0.056$) can be seen in Fig.\ \ref{fig5}. In Fig.\ \ref{fig5} (c), the reconstruction $x^{*}$ is computed with $\alpha: \|F(x_{\alpha}^{\delta})-y^{\delta}\|_Y= c\delta$. Rerror is 4.45\%. In Fig.\ \ref{fig5} (d), the reconstruction $x^{*}$ is computed with $\alpha$ determined by Algorithm \ref{alg1}. Rerror is 2.17\%. It is shown that we can obtain better results if the parameter $\alpha$ is determined by Algorithm \ref{alg1}.

\begin{figure}[tbhp]
\centering
\subfigure[$\tau_{1}=1$, $\tau_{1}=2$, $\left(2\gamma+3\right)\tau_{1}=4$ ]{\includegraphics[width=80mm,height=60mm]{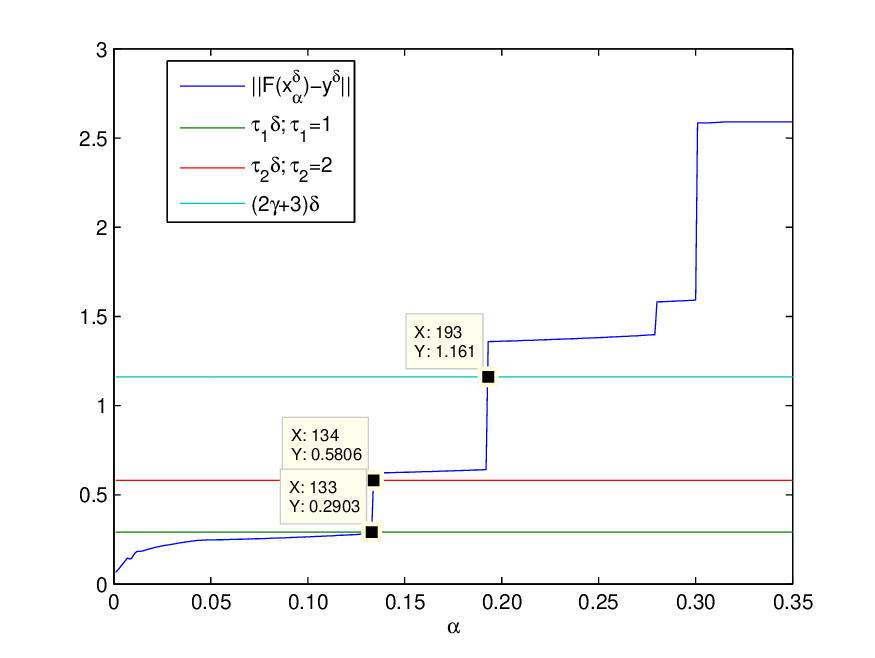}}
\subfigure[$\tau_{1}=2.208$, $\tau_{1}=4.681$, $\left(2\gamma+3\right)\tau_{1}=8.832$]{\includegraphics[width=80mm,height=60mm]{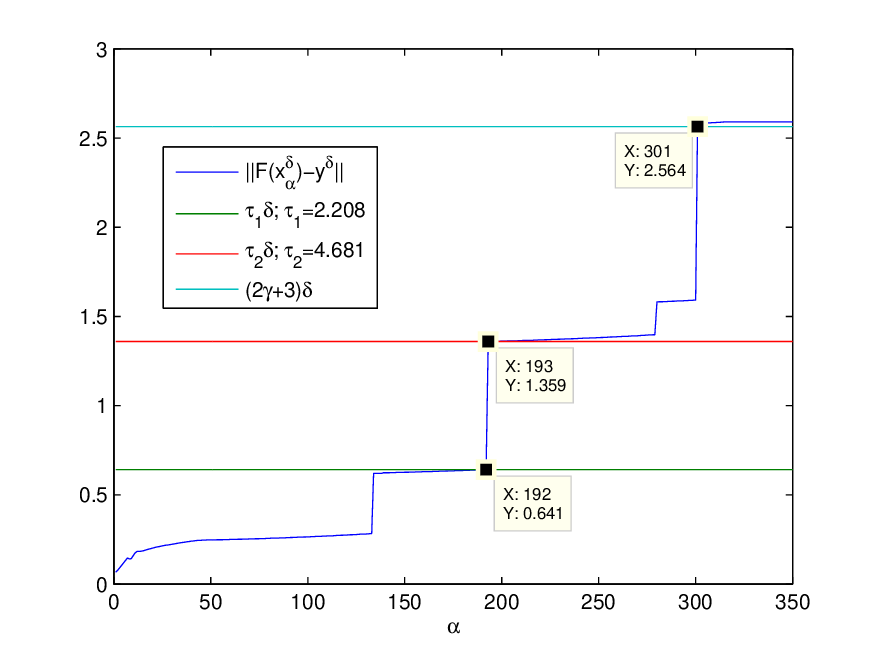}}
\caption{The value of the discrepancy $\left\|F\left(x\right)-y^{\delta}\right\|_{Y}$ by the ST-$\left(\alpha\ell_{1}-\beta\ell_{2}\right)$ $\left(\beta=0\right)$ algorithm with different values of $\alpha$.}
\label{fig6}
\end{figure}

\par Next we show if $\tau_{1}$ and $\tau_{2}$ are chosen small, the existence of $\alpha$ can not be guaranteed under the condition \eqref{equ1.4}. We let $\sigma=30$dB, $\delta=0.2903$ and $c\delta=1.1612$. In Fig.\ \ref{fig6} (a), it is shown that if $1\leq \tau_{1}<\tau_{2}\leq 2$, then $\tau_{1}\delta\geq 0.5823$ and $\tau_{2}\delta\leq 0.5823$. Consequently, there is no $\alpha$ such that the traditional MDP \eqref{equ1.4} holds. Even with larger values, e.g. $2.208 \leq\tau_{1}<\tau_{2}\leq 4.681$, it is challenging to find $\alpha$ such that \eqref{equ1.4} holds, as shown in Fig.\ \ref{fig6} (b). Indeed, for the traditional MDP with linear ill-posed problems, one commonly tries $\alpha_j=\alpha\,2^{-j}$, $j=1, 2, \cdots$. We increase $j$ and calculate $x_{\alpha_j}^\delta$ until \eqref{equ1.4} is valid.

\begin{figure}[tbhp]
\centering
\includegraphics[width=80mm,height=60mm]{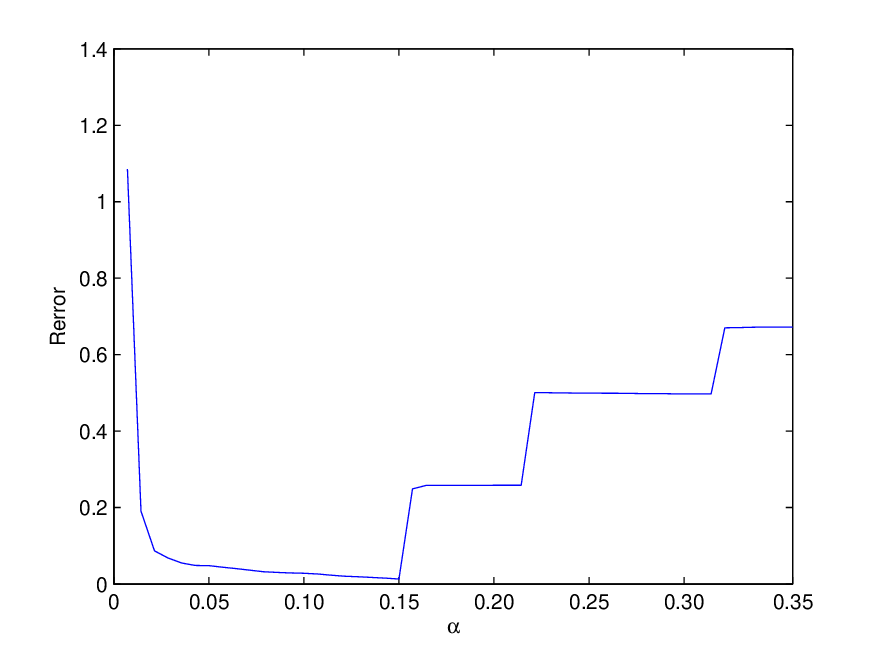}
\caption{The relative error of reconstruction $x^*$ by the ST-$\left(\alpha\ell_{1}-\beta\ell_{2}\right)$ $\left(\beta=0\right)$ algorithm with different $\alpha$.}
\label{fig7}
\end{figure}

\par However, for nonlinear ill-posed problems, the discrepancy is not continuous with respect to $\alpha$. If $\alpha_j=0.2$, then $\alpha_{j+1}=0.1$, we can not find an $\alpha$ such that $\tau_{1}\delta\leq \|F(x_{\alpha}^{\delta})-y^{\delta}\|_Y\leq \tau_{2}\delta$ holds (cf.\ Fig.\ \ref{fig6}). Certainly, we can try $\alpha_j=\alpha\,3^{-j}$ or $\alpha_j=\alpha\,4^{-j}$, etc. Nevertheless, it requires more computational time. In addition, one still does not have the theoretical assurance on the existence of $\alpha$. So it is challenging to choose appropriate values of $\tau_{1}$ and $\tau_{2}$ to ensure the existence of $\alpha$. Although the upper bound $c(\delta)$ in \eqref{equ1.5} seems loose, it guarantees the existence of $\alpha$. We can determine an $\alpha$ by Algorithm \ref{alg1} such that the modified MDP \eqref{equ1.5} holds. So the upper bound $c(\delta)$ appears to be natural.

\par Finally, Fig.\ \ref{fig7} illustrates the relationship between the relative error and the regularization parameter. The diagram indicates that the relative error of the reconstructed solution is minimized when $\alpha=0.15$. At this point, the regularization parameter $\alpha$ satisfies $\tau_{1}\delta\leq \left\|F(x_{\alpha}^{\delta})-y^{\delta}\right\|_{Y}\leq c\delta$, but it does not fulfill $\tau_{1}\delta\leq \left\|F(x_{\alpha}^{\delta})-y^{\delta}\right\|_{Y}\leq \tau_{2}\delta$, as shown in Fig.\ \ref{fig6} (a). This further emphasizes that a more suitable regularization parameter can be identified using Algorithm \ref{alg1}.

\subsection{A nonlinear one-dimensional gravity inversion problem}

\par We then present results on a nonlinear one-dimensional gravity inversion problem \cite{RAUCAH24} to demonstrate the efficiency of MDP for the Tikhonov regularization. For this one-dimensional gravity inversion problem, the operator $F$ exhibits Lipschitz continuity for its first-order derivative but fails to meet the tangential cone condition. The primary objective of this example is to assess the applicability of the method discussed in this paper to other nonlinear inversion problems, even when these problems do not adhere to the tangential cone condition.

Investigating one-dimensional gravity anomaly inversion is of significant importance not only for theoretical analyses of potential fields but also for its practical applications in geophysical exploration, where the nonlinear effects of complex geological structures frequently influence observational data. For instance, in the context of geophysical exploration, gravimeters measure the vertical component of the gravitational field \cite{B96}, which is induced by subsurface density anomalies. Due to the inherent characteristics of signal superposition and attenuation, where the field signal diminishes inversely with the square of the distance, actual observational data can be conceptualized as a nonlinear integral transform of the underlying density distribution. Thus, we can only extract one-dimensional nonlinear projection information regarding the subsurface media. Within the sparse constraint inversion proposed by \cite{LO98}, high-precision inversion remains attainable if the density anomalies exhibit local sparsity conditions, such as those characteristics of a finite layer model. The forward modeling of gravity anomalies is grounded in the gravitational theory about spherical bodies. Specifically, for a sphere of radius $R$, exhibiting a density contrast $\triangle\rho$, relative to its surrounding matrix and located at a depth $d$,  the vertical gravity anomaly recorded at the surface measurement point $x$ is given by
\begin{equation}\label{equ5.3}
    g(x)=\frac{4\pi}{3} \frac{G \triangle\rho R^{3} d} {\left(d^{2}+\left(x-x_{0}\right)^{2}\right)^{3/2}},
\end{equation}
where $G$ denotes the gravitational constant $(6.674\times 10^{-11} m^{3} kg^{-1} s^{-2})$, and $x_{0}$ represents the horizontal position of the sphere. In the case of two spheres, the cumulative gravity anomaly experienced at a given point is the summation of the anomalies produced by each sphere
\begin{equation}\label{equ5.4}
    g_{total}(x)=g_{1}(x)+g_{2}(x),
\end{equation}
where $g_{1}\left(x\right)$ denote the gravitational influence of the first spherical body at point $x$, while $g_{2}\left(x\right)$ signifies the gravitational influence of the second spherical body at the same point $x$. These functions collectively represent the contributions to the gravity anomaly at $x$ induced by each respective sphere. For this analysis, we examine a simplified scenario of one-dimensional gravity inversion, where two spheres contribute to the gravity anomalies, as illustrated in Fig.\ \ref{fig7}. The spheres under consideration possess a radius of $r_{1}=r_{2}=100m$, a density contrast of $\triangle\rho = 300 kg/m^{3}$, and depths from the surface to their centers of $d_{1} = 150m$ and $d_{2} = 200m$, respectively. This configuration can be interpreted as a model representing iron ore (a mineral characterized by high iron content) situated within sedimentary rock formations. Our objective is to ascertain the positional and depth-related information of the spheres based on the observed gravity anomaly data.

\begin{figure}[tbhp]
\centering
\includegraphics[width=150mm,height=100mm]{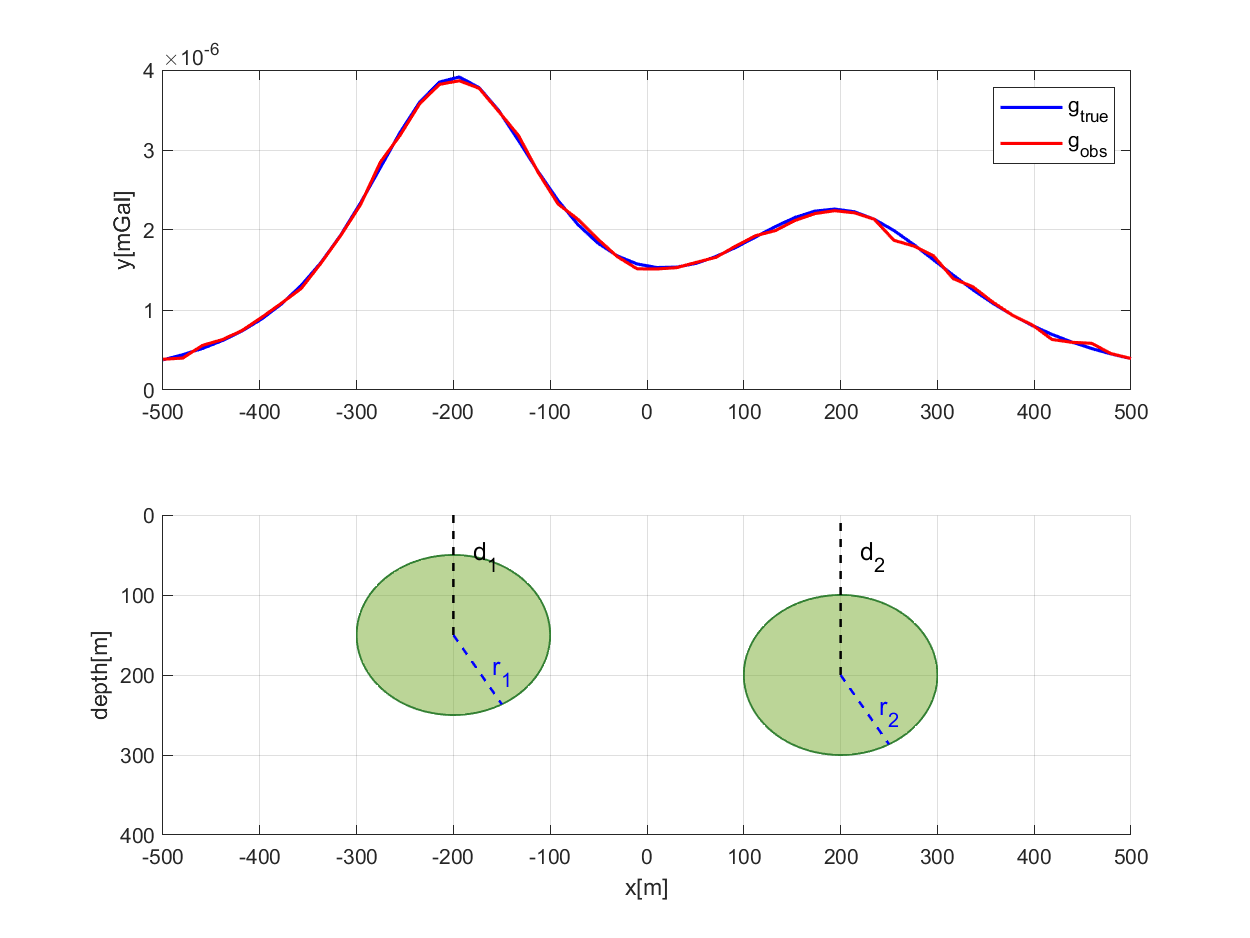}
\caption{Comparative analysis of observed Gravity values and theoretical values.}
\label{fig8}
\end{figure}

\par Next, we reformulate the problems above into the context of nonlinear ill-posed problems. We denote this relationship as
\begin{equation*}
    F\left(x\right)=g^{\delta},
\end{equation*}
where $F$ is a nonlinear operator associated with the gravity system, $g^{\delta}$ represents the observed gravity anomaly data, and $x$ is the parameter to be determined, which includes information on the location and the depth. 

\par We employ the Landweber iterative algorithm in the experiment to resolve these nonlinear ill-posed problems, as detailed in \cite{HNS95}. The Landweber iterative algorithm can be expressed in a simplified form
\begin{equation*}
    x^{k+1}=x^{k+1}-\omega_{k}F'(x^{k})^{*}\left(F(x^{k})-g^{\delta}\right).
\end{equation*}
To enhance the stability of the algorithm, we incorporate a Tikhonov regularization term \cite{EHN1996, FM99, IJ15} into the functional. To prevent the solution from getting trapped in local minima and to accelerate the convergence, we apply hard-thresholding to the regularization parameter and adapt the step size using the Frobenius norm of the Jacobian matrix \cite{AS21, BL08}.

\par We consider a scenario with $50$ measurement points, a side line range of $\left[-500,500\right]$, and a true parameter set of $\left[-200, 150, 200, 200\right]$ representing the positions and depths of two spheres. We establish a maximum iteration $maxiter=500$, an initial parameter guess of $\left[-150, 100, 150, 250\right]$, and introduce a 2\% Gaussian noise to simulate the disturbances typically encountered in actual observational processes.

\begin{figure}[tbhp]
\centering
\subfigure[Rerror=0.1195, $x^{*}$=\text{[-194.2, 148.2, 179.3, 213.7]}]{\includegraphics[width=80mm,height=60mm]{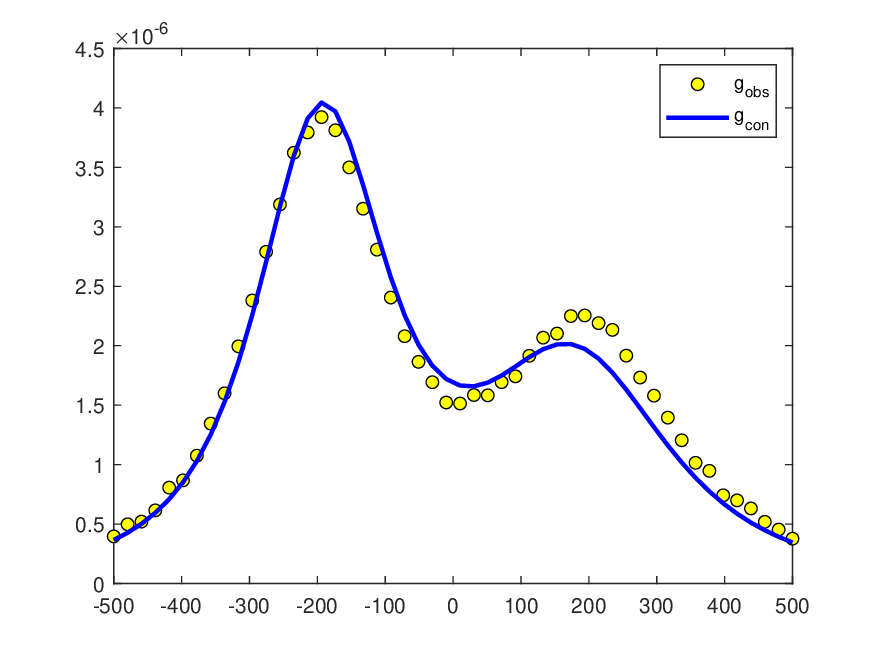}}
\subfigure[Rerror=0.0294, $x^{*}$=\text{[-199.1, 149.6, 193.6, 203.6]}]{\includegraphics[width=80mm,height=60mm]{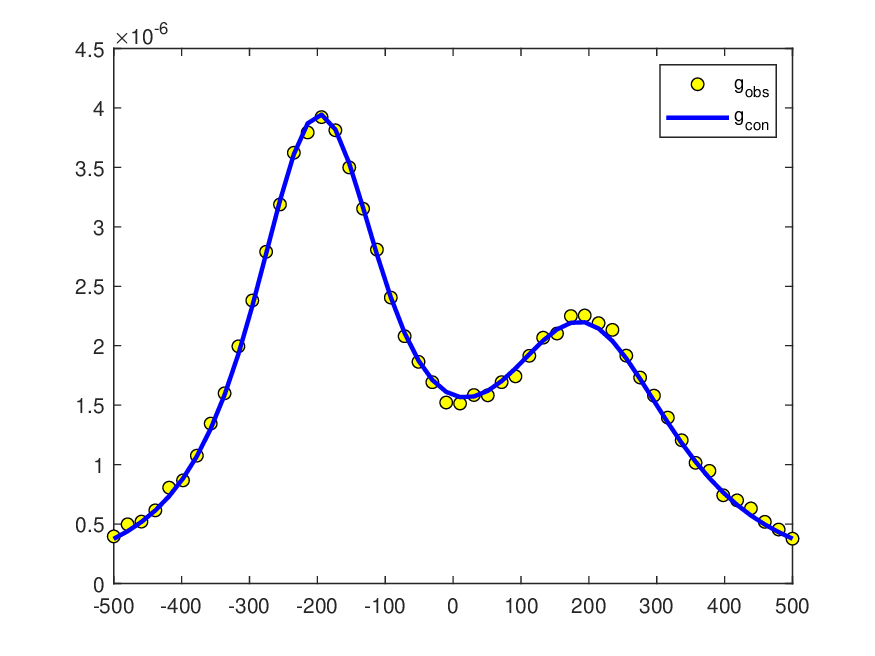}}
\caption{(a) Reconstructed data by recovered solution with $\alpha: \|F(x_{\alpha}^{\delta})-y^{\delta}\|_Y= c\delta$; (b) Reconstructed data by recovered solution with $\alpha$ is determined by Algorithm \ref{alg1}.}
\label{fig9}
\end{figure}

\par The theoretical measurement data, actual measurement data with the 2\% Gaussian noise ($\delta=2.7421\times 10^{-7}$ and $c\delta=1.2087\times 10^{-6}$), and the true location parameters of the two underground spheres are illustrated in Fig.\ \ref{fig8}. Fig.\ \ref{fig9} shows the theoretical measurement data generated from the reconstruction solutions obtained using various methods for selecting the regularization parameter. In Fig.\ \ref{fig9} (a), the reconstruction $x^{*}$ is calculated with $\alpha: \|F(x_{\alpha}^{\delta})-y^{\delta}\|_Y= c\delta$ and Rerror is 11.95\%. In Fig.\ \ref{fig9} (b), the reconstruction $x^{*}$ is derived using $\alpha$ determined by Algorithm \ref{alg1} and Rerror is 2.94\%. This demonstrates that better outcomes can be achieved when the parameter $\alpha$ is selected using Algorithm \ref{alg1}.

\begin{figure}[tbhp]
\centering
\subfigure[$\tau_{1}=1.102$, $\tau_{2}=2.771$, $\left(2\gamma+3\right)\tau_{1}=4.408$]{\includegraphics[width=80mm,height=60mm]{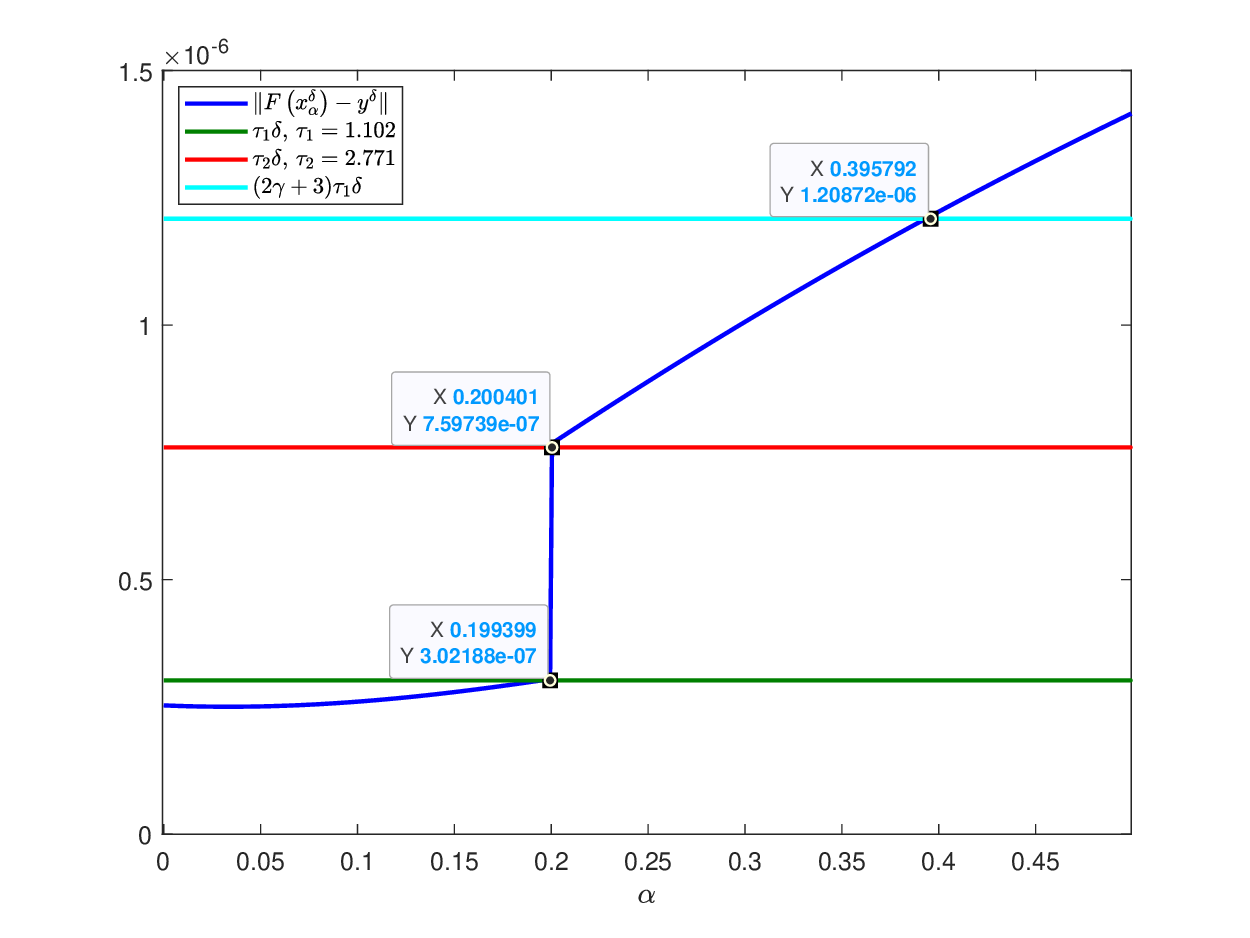}}
\subfigure[]{\includegraphics[width=80mm,height=60mm]{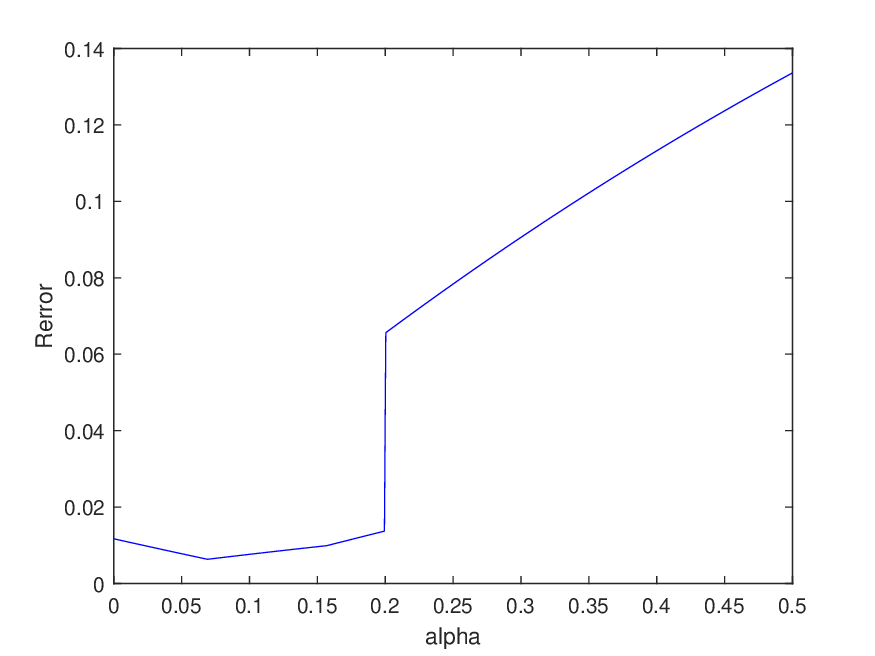}}
\caption{(a) The value of the discrepancy $\left\|F\left(x\right)-y^{\delta}\right\|_{Y}$ by the Landweber iterative algorithm with different $\alpha$; (b) The relative error of reconstruction $x^*$ by the Landweber iterative algorithm with different $\alpha$.}
\label{fig10}
\end{figure}

\par Additionally, we analyze the scenario where a small value is chosen for $\tau_{2}$. In this case, the existence of $\alpha$ can not be guaranteed under condition \eqref{equ1.4}. Maintaining the 2\% Gaussian noise yields $\delta=2.7421\times 10^{-7}$ and $c\delta=1.2087\times 10^{-6}$. As shown in Fig.\ \ref{fig10} (a), if $1.102\leq\tau_{1}<\tau_{2}\leq 2.771$, then $\tau_{2}\delta<7.5974\times 10^{-7}$. Consequently, no $\alpha$ satisfies the traditional MDP \eqref{equ1.4}. This observation further emphasizes that the discrepancy may not be continuous with respect to $\alpha$ for some nonlinear ill-posed problems. The upper bound $c\delta$ in \eqref{equ1.5} guarantees the existence of $\alpha$. We can determine an appropriate $\alpha$ using Algorithm \ref{alg1} such that the modified MDP \eqref{equ1.5} holds.

\par Finally, Fig.\ \ref{fig10} (b) illustrates the relationship between the relative error and the regularization parameter. The diagram indicates that the relative error of the reconstructed solution is minimized when $\alpha\approx 0.07$. At this point, though the regularization parameter $\alpha$ does not satisfy both $\tau_{1}\delta\leq \left\|F(x_{\alpha}^{\delta})-y^{\delta}\right\|_{Y}\leq c\delta$ and $\tau_{1}\delta\leq \left\|F(x_{\alpha}^{\delta})-y^{\delta}\right\|_{Y}\leq \tau_{2}\delta$, as shown in Fig.\ \ref{fig10}, Fig.\ \ref{fig9} (a) still emphasizes that a suitable regularization parameter can be identified by Algorithm \ref{alg1}.

\section{Conclusion}\label{sec7}

\par For Tikhonov regularization involving a nonlinear operator and a general convex penalty, we demonstrate the existence of a regularization parameter $\alpha$ such that
    \[ \tau_1\delta\le\|F(x_{\alpha}^{\delta})-y^{\delta}\|_{Y}\le c\delta,\]
where $c:=\max\left\{ \tau_2,(3+2\gamma)\tau_1\right\}$. Furthermore, we establish that $\alpha\equiv \alpha(\delta,y^{\delta})\rightarrow 0$ as the noise level $\delta\rightarrow 0$. It is shown that this regularization method is well-posed when $\alpha$ is determined using Morozov’s Discrepancy Principle. We also prove the convergence of the regularized solution $x_{\alpha}^{\delta}$, as defined by \eqref{equ2.1}, to an $\mathcal{R}$-minimum solution of the problem $F(x)=y$. This convergence occurs under Morozov’s principle, yielding a linear convergence rate $O(\delta)$ for the regularized solution $x_{\alpha}^{\delta}$ under the source condition and utilizing Bregman distance. Numerical simulation results illustrate the effectiveness of the regularization parameter selection method, showing that the relative error decreases as $\delta$ decreases. Although the upper bound in \eqref{equ1.5} may appear loose, the existence of $\alpha$ is assured. Moreover, we can still achieve satisfactory results if the noise levels are sufficiently low.


\begin{thebibliography}{99}

\bibitem{AD19}
Albani V and De Cezaro A. \emph{A connection between uniqueness of minimizers in Tikhonov-type regularization and Morozov-like discrepancy principles}. Inverse Problems \& Imaging, 2019, 13:  211--229.

\bibitem{AR10}
Anzengruber S W and Ramlau R. \emph{Morozov's discrepancy principle for Tikhonov-type functionals with nonlinear operators}. Inverse Problems, 2010, 26: 025001.

\bibitem{AR11}
Anzengruber S W and Ramlau R. \emph{Convergence rates for Morozov's discrepancy principle using variational inequalities}. Inverse Problems, 2011, 27: 105007.

\bibitem{AS21}
Axiotis K and Sviridenko M. \emph{Sparse Convex Optimization via Adaptively Regularized Hard Thresholding}. Journal of Machine Learning Research, 2021, 22(1):47.

\bibitem{BE13}
Beck A and Eldar Y C. \emph{Sparsity constrained nonlinear optimization: optimality conditions and algorithms}. SIAM Journal on Optimization, 2013, 23: 1480--1509.

\bibitem{BB18}
Benning M and Burger M. \emph{Modern regularization methods for inverse problems}. Acta Numerica, 2018, 1--111.

\bibitem{B13}
Blumensath T. \emph{Compressed sensing with nonlinear observations and related nonlinear optimization problems}. IEEE Transactions on Information Theory, 2013, 59: 3466--3474.



\bibitem{B09}
Bonesky T. \emph{Morozov's discrepancy principle and Tikhonov-type functionals}. Inverse Problems, 2009, 25: 015015.


\bibitem{BL08}
Bredies K and Lorenz D A. \emph{Iterated hard shrinkage for minimization problems with sparsity constraints}. SIAM Journal on Scientific Computing, 2008, 30: 657--683.



\bibitem{B96}
Blakely R J. \emph{Potential Theory in Gravity and Magnetic Applications}. Cambridge university press, 1996.

\bibitem{CL16}
Chepuri S P and Leus G. \emph{Sparsity-promoting sensor selection for non-linear measurement models}. IEEE Transactions on Signal Processing, 2015, 63: 684--698.


\bibitem{DDD04}
Daubechies I, Defrise M and De Mol C. \emph{An iterative thresholding algorithm for linear inverse problems with a sparsity constraint}. Communications on Pure and Applied Mathematics, 2004, 57(11): 1413--1457.

\bibitem{DDD16}
Daubechies I, Defrise M, and De Mol C. \emph{Sparsity-enforcing regularisation and ISTA revisited}. Inverse Problems, 2016, 32: 104001.

\bibitem{DH19}
Ding L and Han W. \emph{$\alpha\ell_{1}-\beta\ell_{2}$ regularization for sparse recovery}, Inverse Problems, 2019, 35: 125009.

\bibitem{DH20}
Ding L and Han W. \emph{A projected gradient method for $\alpha\ell_{1}-\beta\ell_{2}$ sparsity regularization}, Inverse Problems, 2020, 36: 125012.


\bibitem{DH23}
Ding L and Han W. \emph{Morozov's discrepancy principle for $\alpha\ell_{1}-\beta\ell_{2}$ sparsity regularization}, Inverse Problems and Imaging, 2023, 17: 157--179.

\bibitem{DH24}
Ding L and Han W. \emph{$\alpha\ell_{1}-\beta\ell_{2}$ sparsity regularization for nonlinear ill-posed problems}, Journal of Computational and Applied Mathematics, 2024, 450: 115987.

\bibitem{EHN1996}
Engl H W, Hanke M, and Neubauer A. \emph{Regularization of Inverse Problems}. Mathematics and its Applications vol 375: Dordrecht: Kluwer, 1996.

\bibitem{F2010}
Fornasier M, eds. \emph{Theoretical Foundations and Numerical Methods for Sparse Recovery}. De Gruyter, 2010.


\bibitem{FM99}
Frommer A and Maass P. \emph{Fast cg-based methods for Tikhonov regularization}. SIAM J. Sci. Comp., 1999, 5: 1831--1850.

\bibitem{G09}
Grasmair M. \emph{Well-posedness and convergence rates for sparse regularization with sublinear $\ell^q$ penalty term}. Inverse Problems $\&$ Imaging, 2009, 3: 383--387.

\bibitem{GHS08}
Grasmair M, Haltmeier M, and Scherzer O. \emph{Sparse regularization with $\ell^q$ penalty term}. Inverse Problems, 2008, 24: 055020.


\bibitem{HNS95}
Hanke M, Neubauer A, and Scherzer O. \emph{A convergence analysis of the Landweber iteration for nonlinear ill-posed problems}. Numer. Math., 1995: 72: 21--37


\bibitem{IJ15}
Ito K and Jin B. \emph{Inverse Problems. Tikhonov Theory and Algorithms}. Series on Applied Mathematics (Singapore), 2015, 22.


\bibitem{JLS09}
Jin B, Lorenz D A, and Schiffer S. \emph{Elastic-net regularization: error estimates and active set methods}. Inverse Problems, 2009, 25: 115022.

\bibitem{JM12}
Jin B and Maass P. \emph{Sparsity regularization for parameter identification problems}. Inverse Problems, 2012, 28(12): 123001.

\bibitem{JMS17}
Jin B, Maass P, and Scherzer O. \emph{Sparsity regularization in inverse problems}. Inverse Problems, 2017, 33: 060301.

\bibitem{JZZ12}
Jin B, Zhao Y and Zou J. \emph{Iterative parameter choice by discrepancy principle}. IMA J. Numer. Anal., 2012, 32: 1714--1732.

\bibitem{KNS2012}
Kaltenbacher B, Neubauer A, and Scherzer O. \emph{Iterative Regularization Methods for Nonlinear Ill-posed Problems}. De Gruyter, 2012.



\bibitem{LO98}
Li Y and Oldenburg D W. \emph{3-$D$ inversion of gravity data}. Geophysics, 1998, 63(1): 109--119.


\bibitem{L1989}
Louis A K. \emph{Inverse und Schlecht Gestellte Probleme}. Teubner: Stuttgart. 1989.

\bibitem{MPRS01}
Maass P, Pereverzev S V, Ramlau R and Solodky S G. \emph{An adaptive discretization scheme for Tikhonov-regularization with a posteriori parameter selection}. Numerische Mathematik, 2001, 87: 485--502.


\bibitem{M66}
Morozov V A. \emph{On the solution of functional equations by the method of regularization}. Soviet Math. Dokl., 1966, 7: 414--417.

\bibitem{R02}
Ramlau R. \emph{ Morozov’s discrepancy principle for Tikhonov regularization of nonlinear operators}. Numer. Funct. Anal. and Opt., 2002, 23: 147--172.

\bibitem{RAUCAH24}
Riis N A B, Alghamdi A M A, Uribe F, Christensen S L, Afkham B M and Hansen P C. \emph{Cuqipy: i. computational uncertainty quantification for inverse problems in python}. Inverse Problems, 2024, 40(4): 39.

\bibitem{SGGHL2009}
Scherzer O, Grasmair M, Grossauer H, Haltmeier M and Lenzen F. \emph{Variational Methods in Imaging}.  Applied Mathematical Sciences, Vol.167, Newyork: Springer, 2009.

\bibitem{SKHK2012}
Scherzer T, Kaltenbacher B, Hofmann B, and Kazimierski K. \emph{Regularization Methods in Banach Spaces}, De Gruyter, 2012.

\bibitem{TA1977}
Tikhonov A N and Arsenin V Y. \emph{Solutions of Ill-posed Problems}. Winston Wiley: New York, 1977.

\bibitem{TLY1998}
Tikhonov A N, Leonov A S and Yagola A G. \emph{Nonlinear Ill-posed Problems}. London: Chapman \& Hall, 1998.


\bibitem{WLMC13}
Wang W, Lu S, Mao H, and Cheng J. \emph{Multi-parameter Tikhonov regularization with the $\ell_0$ sparsity constrain}. Inverse Problems, 2013, 29: 065018.



\bibitem{YWLJWJ15}
Yang S, Wang M, Li P, Jin L, Wu B, and Jiao L. \emph{Compressive Hyperspectral Imaging via Sparse Tensor and Nonlinear Compressed Sensing}. IEEE Transactions on Geoscience and Remote Sensing, 2015, 53: 5943--5957.


\end{thebibliography}
\end{document}